\documentclass[10pt]{amsart}

\usepackage[centertags]{amsmath}
\usepackage{amsfonts}
\usepackage{amssymb}
\usepackage{amsthm}
\usepackage[bookmarks=true,hyperindex,pdftex,colorlinks,citecolor=red, linkcolor=blue]{hyperref}
\usepackage{tikz}
\usepackage{tikz-cd}
\usepackage[normalem]{ulem}
\usepackage[shortlabels]{enumitem} 
\usepackage{marginnote}
\usepackage[all]{xy}
\usepackage[dvipsnames]{xcolor}
\usepackage{bm}
\usepackage{cancel}

\newcommand{\Ree}{\operatorname{Re}}

\newcommand{\vertiii}[1]{{\left\vert\kern-0.25ex\left\vert\kern-0.25ex\left\vert #1 
		\right\vert\kern-0.25ex\right\vert\kern-0.25ex\right\vert}}

\newcommand{\R}{\mathbb{K}}

\newcommand{\restricted}{\mathord{\upharpoonright}}

\newcommand{\pten}{\widehat{\otimes}_{\pi}}
\newcommand{\borel}{\mathrm{Borel}}
\newcommand{\baire}{\mathrm{Baire}}

\newcommand\restr[2]{{
		\left.\kern-\nulldelimiterspace 
		#1 
		\littletaller 
		\right|_{#2} 
}}
\newcommand{\littletaller}{\mathchoice{\vphantom{\big|}}{}{}{}}

\definecolor{egraf}{rgb}{0.2,0.4,0}

\def\<{\langle}
\def\>{\rangle}

\newcommand{\NA}{\operatorname{NA}}
\newcommand{\INA}{\operatorname{INA}}

\DeclareMathOperator{\co}{co}
\DeclareMathOperator{\aco}{aco}

\DeclareMathOperator{\Span}{span}

\theoremstyle{plain}
\newtheorem{thm}{Theorem}[section]
\newtheorem{introthm}{Theorem}

\newtheorem{introcor}[introthm]{Corollary}
\newtheorem{theorem}[thm]{Theorem}

\newtheorem{lemma}[thm]{Lemma}

\newtheorem{proposition}[thm]{Proposition}

\theoremstyle{definition}

\newtheorem{remark}[thm]{Remark}

\author[M. Jung]{Mingu Jung}
\address[M. Jung]{Department of Mathematics \& Research Institute for Natural Sciences, Hanyang University, 04763 Seoul, Republic of Korea}
\email{mingujung@hanyang.ac.kr}

\begin{document}
	\title[Optimal integral representations in projective tensor products]{Optimal integral representations in projective tensor products: countability and topology}

	\subjclass[2020]{46B28, 46B03, 46B20, 46G10}

	\keywords{projective tensor product; optimal tensor representation; Bochner integral; weak and weak-star Borel structures}
	
\begin{abstract}
We study optimal integral representations in projective tensor products, focusing on two questions: whether they can always be replaced by countable optimal decompositions, and whether the resulting notion depends on the Borel topology used on the product of the unit balls. We show that every infinite-dimensional Banach space $X$ admits an equivalent norm for which, denoting the resulting space by $Z$, there exists an affine homeomorphic embedding
\[
\Psi : \mathcal{P}([0,1]) \to S_{Z \pten Z}
\]
such that $\Psi (\mathcal{P}([0,1]) ) \subseteq \INA_\pi (Z\pten Z)$ and 
\[
\Psi (\alpha) \in \NA_\pi(Z \pten Z) \iff \text{$\alpha$ is countably supported}.
\]
In particular, this implies that there exists a tensor 
\[
u \in \operatorname{INA}_{\pi}(Z\widehat\otimes_{\pi} Z)
 \setminus \operatorname{NA}_{\pi}(Z\widehat\otimes_{\pi} Z)
\]
and the witnessing measure may be chosen to be a nonatomic Radon probability measure. The construction realizes an affine copy of $\mathcal P([0,1])$ as an exposed face of $B_Z$ and compares the diagonal Lebesgue coupling with countable mixtures of product measures. We also prove that the norm, weak, and--on dual spaces--weak-star versions of integral projective norm attainment define the same class of tensors. Consequently, every infinite-dimensional separable reflexive Banach space $X$ with the approximation property admits an equivalent norm such that, for the resulting space $Z$,
$\operatorname{NA}_{\pi}(Z\widehat\otimes_{\pi} Z)
 \subsetneq
 \operatorname{INA}_{\pi}(Z\widehat\otimes_{\pi} Z)
 = Z\widehat\otimes_{\pi} Z$.
\end{abstract}

\maketitle

\section{Introduction}

Let $X$ and $Y$ be Banach spaces over either the real field or the complex field. An element $u \in X \pten Y$ is said to be \emph{projective norm-attaining} if it admits an optimal representation
\begin{equation}\label{def-NA-pi}\tag{$\dagger$}
u=\sum_{n=1}^{\infty}\lambda_n x_n\otimes y_n,
\quad
(x_n,y_n)\in B_X\times B_Y,\quad
\lambda_n\ge 0, \quad \sum_{n=1}^{\infty}\lambda_n=\|u\|_\pi,
\end{equation}
where $\|u\|_\pi$ denotes the projective tensor norm of $u$. The set of all projective norm-attaining tensors is denoted by $\NA_\pi(X\pten Y)$.

The notion was introduced in \cite{DJRR} in connection with norm attainment for nuclear operators and with Bishop--Phelps-type questions for operators. Its symmetric counterpart was subsequently studied in \cite{DGJR}, while related finite-dimensional work dates back to the work of Pe{\l}czy\'nski and Tomczak-Jaegermann \cite{PT}. Recent work has identified broad classes of projective tensor products in which every element is norm-attaining \cite{GGR} and has provided concrete descriptions of optimal representations in spaces of Bochner integrable functions \cite{GG}. 

The existence of an optimal representation is a natural tensorial analogue
of norm attainment for operators and functionals in the classical
Bishop--Phelps theory. Moreover, projective norm-attaining tensors are
closely related to optimal decompositions in projective tensor products,
the geometry of tensor norms, and the theory of nuclear operators.

Inspired by recent developments in the extremal structure and Choquet theory of Lipschitz-free spaces \cite{APS,APS2}, one can replace countable tensor representations \eqref{def-NA-pi} by integral representations, leading to the notion of integral projective norm-attainment introduced in \cite{ADGJR}. An element $u\in X\pten Y$ is called an \emph{integral projective norm-attaining tensor} if there exists a finite positive Borel measure $\mu$ on $B_X\times B_Y$ such that the mapping
\[
\varphi:B_X\times B_Y\to X\pten Y,
\qquad
\varphi(x,y)=x\otimes y,
\]
is $\mu$-Bochner integrable and
\begin{equation}\label{def:eq-INA-pi}\tag{$\dagger\dagger$}
u=\int_{B_X\times B_Y} \varphi(x,y)\,d\mu(x,y),
\qquad
\|u\|_\pi=\|\mu\|.
\end{equation}
Unless otherwise stated, $B_X\times B_Y$ is endowed with the product of the norm topologies, and all Borel measures on this product are taken with respect to the corresponding Borel $\sigma$-algebra. We denote by $\INA_\pi(X\pten Y)$ the set of all integral projective norm-attaining tensors.

Every optimal series representation yields an integral representation by means of a countably supported measure, hence 
\[
\NA_\pi(X\pten Y)\subseteq \INA_\pi(X\pten Y).
\]
Moreover, integral projective norm-attaining tensors can be approximated in norm by projective norm-attaining tensors with finite representations \cite[Theorem 2.11]{ADGJR}. However, this approximation does not decide whether an individual optimal integral representation can always be replaced by an optimal countable one. This led to the following question in \cite[Question 2.2]{ADGJR}:
\[
\NA_\pi(X\pten Y) \stackrel{?}{=} \INA_\pi(X\pten Y) \, \text{ for all Banach spaces } X, Y.
\]

Our main result shows that the answer is negative. Let $\mathcal{P}([0,1])$ denote the space of Borel probability measures on $[0,1]$ equipped with the weak-star topology. We say that $\alpha\in\mathcal P([0,1])$ is \emph{countably supported} if it is concentrated on a countable subset of $[0,1]$, or equivalently, if
\[
    \alpha=\sum_{n=1}^\infty\lambda_n\delta_{t_n}
\]
for some $t_n\in[0,1]$ and $\lambda_n\geq0$ with $\sum_{n=1}^\infty\lambda_n=1$.

\begin{introthm}\label{thm:mainA}
Every infinite-dimensional Banach space $X$ admits an equivalent norm $\vertiii{\cdot}$ such that for $Z = (X, \vertiii{\cdot})$, there exists an affine homeomorphic embedding
\[
\Psi : \mathcal{P}([0,1]) \to S_{Z \pten Z}
\]
satisfying that $\Psi (\mathcal{P}([0,1]) ) \subseteq \INA_\pi (Z\pten Z)$ and 
\[
\Psi (\alpha) \in \NA_\pi(Z \pten Z) \iff \text{$\alpha$ is countably supported}.
\]
\end{introthm}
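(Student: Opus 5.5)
The plan follows the hint in the abstract. We realize an affine copy of $\mathcal P([0,1])$ as an exposed face $F$ of $B_Z$, define $\Psi$ through the diagonal coupling, and then compare diagonal couplings with countable mixtures of product measures.

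\emph{Step 1: renorming.} Since $X$ is infinite-dimensional, it contains a basic sequence. From it we build a bounded linear injection $T\colon C[0,1]^*\supseteq M[0,1]\to X$ (for instance, send a measure to suitably weighted moments or Haar-type coefficients), so that $T$ restricted to $\mathcal P([0,1])$ is weak-star-to-norm continuous and injective, hence an affine homeomorphism onto a norm-compact convex set $K=T(\mathcal P([0,1]))$. Because $K$ is compact, the set $\overline{\mathrm{aco}}(K\cup \varepsilon B_X)$ is a bounded, absorbing, absolutely convex body. Its Minkowski functional is an equivalent norm, which we perturb by a functional $f$ with $f\equiv 1$ on $K$ and $|f|<1$ elsewhere on the ball, so that $F:=\{z\in B_Z: f(z)=1\}=K$ is an exposed face with $\|f\|=1$. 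We arrange that $F$ is ``flat'': every $z\in F$ has norm $1$, and convex combinations of points of $F$ stay in $F$.

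\emph{Step 2: definition and the INA part.} Let $e(t)=T\delta_t\in F$; this map is continuous and injective. Define
\[
\Psi(\alpha)=\int_{[0,1]} e(t)\otimes e(t)\,d\alpha(t),
\]
which is the integral of $\varphi$ against the push-forward of $\alpha$ under $t\mapsto(e(t),e(t))$. The integrand is continuous and bounded, so the Bochner integral exists, and $\Psi$ is affine and weak-star-to-norm continuous. It is injective because applying $g\otimes g$ for suitable $g\in Z^*$ recovers moments of $\alpha$; compactness of $\mathcal P([0,1])$ then makes $\Psi$ a homeomorphic embedding. For the norm, $f\otimes f$ has norm $1$ and pairs to $1$ with $\Psi(\alpha)$, so $\|\Psi(\alpha)\|_\pi=1$ and the representation is optimal. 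This gives $\Psi(\alpha)\in S\cap\INA_\pi$. If $\alpha=\sum\lambda_n\delta_{t_n}$, the same computation shows that $\Psi(\alpha)\in\NA_\pi$.

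\emph{Step 3: the converse, which I expect to be the main obstacle.} Suppose $\Psi(\alpha)=\sum\lambda_n x_n\otimes y_n$ with $\sum\lambda_n=1$. Testing against $f\otimes f$ forces $f(x_n)f(y_n)=1$, hence after rotating scalars $x_n,y_n\in F$, say $x_n=T\beta_n$ and $y_n=T\gamma_n$ with $\beta_n,\gamma_n\in\mathcal P([0,1])$. Then
\[
(T\otimes T)\Bigl(\sum\lambda_n\,\beta_n\otimes\gamma_n\Bigr)=(T\otimes T)\bigl(\text{diagonal coupling }(\mathrm{id},\mathrm{id})_\#\alpha\bigr).
\]
The key requirement, which must be built into Step 1, is that $T\otimes T$ be injective on differences of such measures on $[0,1]^2$, or at least that the ``mixed moments'' determine the measure on the square. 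For example, choose $T$ so that the functionals $g_k\otimes g_l$ evaluate $\int p_k(s)p_l(t)$ for a family of polynomials $p_k$ spanning a dense subspace, which yields an identity of measures on $[0,1]^2$. Once this holds, the diagonal measure $\Delta_\#\alpha$ equals the countable mixture $\sum\lambda_n\,\beta_n\times\gamma_n$. Each product measure $\beta_n\times\gamma_n$ is absolutely continuous with respect to $\Delta_\#\alpha$, so it is concentrated on the diagonal, which forces $\beta_n=\gamma_n=\delta_{t_n}$ to be a point mass. Therefore $\alpha=\sum\lambda_n\delta_{t_n}$ is countably supported.

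The delicate point is reconciling two demands on $T$: norm-compactness of $K$, which needs decaying weights, and the faithful separation of measures on the square by functionals of norm comparable to $1$ in the new norm. I would try taking $T\mu=\sum_k 2^{-k}\bigl(\int p_k\,d\mu\bigr)e_k$ with a basic sequence $(e_k)$ and biorthogonal functionals. Injectivity on the square then only needs the functionals $e_k^*\otimes e_l^*$, which are bounded regardless of normalization.
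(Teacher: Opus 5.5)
Your route is the paper's: weighted moments $2^{-k}\int p_k\,d\mu$ along a normalized basic sequence, and a closed absolutely convex hull renorming that makes the copy of $\mathcal P([0,1])$ an exposed face. The remaining steps also match: $\Psi(\alpha)=\int e(t)\otimes e(t)\,d\alpha(t)$, the norm computed with $f\otimes f$, summands of an optimal series forced onto the face, Bochner--Fubini for product measures, the functionals $e_j^*\otimes e_k^*$ to recover a measure on $[0,1]^2$, and the fact that a product probability carried by the diagonal is a Dirac mass. As you conclude yourself, the ``delicate point'' you single out is not one (compare Proposition~\ref{prop:Theta-continuous-injection}). The step that actually needs work is the one you only assert in Step~1, and as formulated it can fail.

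\emph{First}, a bounded $f$ with $f\equiv1$ on $K$ has to be built into $T$. With the plain moments $p_k(t)=t^k$, $k\ge1$ (the paper's $\Phi$), $T\delta_0=0\in K$, so no such $f$ exists. Density of $\Span\{p_k\}$ alone does not suffice either. For $p_k(t)=((1+t)/2)^k$ the span is dense, yet $\sum_k2^{-k}f(e_k)p_k\equiv1$ with $\sup_k|f(e_k)|<\infty$ is impossible: as a power series in $v=(1+t)/2$ it is analytic on $|v|<2$ and vanishes at $v=0$, so it cannot be $\equiv1$ on $[1/2,1]$. You need the constant function in the finite span, e.g.\ $p_1\equiv1$ and $f=2e_1^*$. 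The paper does the equivalent thing by adjoining a scalar coordinate. It takes $F=\{1\}\times\Phi(\mathcal P([0,1]))\subseteq\mathbb K\oplus X$, exposed by $e_0^*$, runs the construction on $\ker e^*$, and identifies $\mathbb K\oplus\ker e^*$ with the original space. The same coordinate supplies the constant among the mixed moments $\int t^js^k\,d\eta$, $j,k\ge0$.

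\emph{Second}, even with such an $f$, the identity $\{z\in\overline{\aco}(K\cup\varepsilon B_X):f(z)=1\}=K$ does not follow from the definition of the hull. Step~3 relies on it to write $x_n=T\beta_n$, but the closure and the rotated copies $\omega K$ could a priori put extra points on that hyperplane. You need $\varepsilon\|f\|<1$ and an approximation argument. For $z=\sum_jr_j\omega_jk_j+qb$ with $f(z)$ close to $1$, the weight $q$ on the ball is small. The rotations concentrate at $1$ because $\sum_jr_j|\omega_j-1|^2=2\sum_jr_j-2\Ree\sum_jr_j\omega_j$. Then $z$ is close to the point $(\sum_jr_j)^{-1}\sum_jr_jk_j$ of the compact convex set $K$. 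This is exactly Lemma~\ref{lem:renorming}.

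With these two points supplied, plus the short proof of Lemma~\ref{lem:diagonal-product}, your sketch becomes a proof. Renorming $X$ directly with $p_1\equiv1$ even avoids the passage through $\ker e^*$.
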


\begin{introcor}\label{cor:mainA}
    An infinite-dimensional Banach space $X$ admits an equivalent norm $\vertiii{\cdot}$ such that for $Z = (X, \vertiii{\cdot})$, there exists a tensor 
\[
u \in  \INA_\pi (Z \pten Z) \setminus \NA_\pi (Z \pten Z).
\]
Moreover, $u$ is witnessed by a nonatomic Radon probability measure.
\end{introcor}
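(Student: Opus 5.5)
Corollary~\ref{cor:mainA} follows directly from Theorem~\ref{thm:mainA}. I would take $\alpha=\lambda$, the Lebesgue measure on $[0,1]$, and put $u=\Psi(\lambda)$. Then $\lambda$ is a Borel probability measure with no atoms, so it is not concentrated on any countable set. By the equivalence in Theorem~\ref{thm:mainA}, $u\notin\NA_\pi(Z\pten Z)$. On the other hand $u\in\Psi(\mathcal P([0,1]))\subseteq\INA_\pi(Z\pten Z)$, which gives the first claim.

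For the ``moreover'' part, I need to look at the witnessing measure that the construction produces, not just at membership in $\INA_\pi$. As the abstract indicates, $\Psi$ comes from an affine embedding $t\mapsto e(t)$ of $[0,1]$ (really of $\mathcal P([0,1])$) onto an exposed face of $B_Z$. The natural representing measure for $\Psi(\alpha)$ is the pushforward of the diagonal coupling of $\alpha$ under $t\mapsto (e(t),e(t))\in B_Z\times B_Z$. So I would set
\[
\mu=(e\times e)\circ\Delta_{\#}\lambda,\qquad \Delta(t)=(t,t).
\]
Three properties then need checking. First, $\mu$ is a probability measure with $\|\mu\|=1=\|u\|_\pi$, which holds because $\Psi$ maps into the unit sphere $S_{Z\pten Z}$. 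Second, $\mu$ is nonatomic: the map $t\mapsto (e(t),e(t))$ is injective (it is a homeomorphic embedding), so every singleton has preimage at most a single point, which has Lebesgue measure zero. Third, $\mu$ is Radon: it is the image of a Radon measure on the compact metric space $[0,1]$ under a continuous map into the norm topology. Such an image is concentrated on a compact metrizable set, and every finite Borel measure on a Polish space is Radon.

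The one step that needs care is confirming that the representing measure used in the proof of Theorem~\ref{thm:mainA} really is this diagonal pushforward. In particular, the map $t\mapsto e(t)\otimes e(t)$ must be norm-continuous, so that it is Bochner integrable with respect to $\mu$. If the construction instead uses a measure $\mu_\alpha$ given by some other formula, the same injectivity and continuity argument applied to $\mu_\lambda$ still gives both nonatomicity and the Radon property.
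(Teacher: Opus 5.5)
Your proposal is correct and follows essentially the same route as the paper: take $\alpha$ to be Lebesgue measure, use Theorem~\ref{thm:mainA} to get $\Psi(\alpha)\notin\NA_\pi$, and for the ``moreover'' part use $\mu=\gamma_\sharp(\Delta_\sharp\alpha)$ with $\gamma(t,s)=(p(t),p(s))$, which is exactly the representing measure the paper constructs in Proposition~\ref{prop:integral-NA}. Your injectivity argument for nonatomicity and your pushforward/compact-support argument for the Radon property match the paper's reasoning; the paper cites Bogachev for the Radon pushforward.
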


The construction has a measure-theoretic mechanism. Using a normalized basic sequence, we embed the simplex $\mathcal P([0,1])$ affinely and homeomorphically into the ambient Banach space by recording its moments. We then renorm a one-dimensional extension so that this copy of $\mathcal P([0,1])$ becomes an exposed face of the new unit ball. For each $\alpha\in\mathcal P([0,1])$, the tensor $\Psi(\alpha)$ is the barycenter of the diagonal curve
\[
    t\mapsto p(t)\otimes p(t)
\]
with respect to $\alpha$ (see \eqref{def:psi(alpha)}). The tensor in Corollary~\ref{cor:mainA} is obtained by taking $\alpha$ to be the Lebesgue probability measure.

If $\Psi(\alpha)$ admitted an optimal countable representation, the exposed-face structure would force every summand to correspond to a pair of probability measures $\alpha_n,\beta_n\in\mathcal P([0,1])$. The resulting representation would therefore determine a countable convex combination of product measures $\alpha_n\times\beta_n$ on $[0,1]^2$. The moment coordinates force this measure to coincide with the diagonal coupling $\Delta_\sharp\alpha$. Since every product probability measure concentrated on the diagonal is a Dirac mass (Lemma \ref{lem:diagonal-product}), $\Delta_\sharp\alpha$, and hence $\alpha$, must be countably supported.

A recent preprint of Han, Kim, Mart\'in, and Rueda Zoca provides another negative answer to \cite[Question~2.2]{ADGJR}. More precisely, they proved in \cite[Corollary~4.9]{HKMRZ26} that, whenever $1<p,q<\infty$ and $1/p+1/q<1$, one has
\[
    \NA_\pi(\ell_p\widehat{\otimes}_\pi\ell_q) \subsetneq \INA_\pi(\ell_p\widehat{\otimes}_\pi\ell_q) = \ell_p\widehat{\otimes}_\pi\ell_q.
\]
Their result gives concrete examples among classical reflexive spaces,
without passing to an equivalent renorming, whereas Theorem \ref{thm:mainA} applies, after an equivalent renorming, to every infinite-dimensional Banach space and identifies an affine copy of $\mathcal P([0,1])$ on which projective norm attainment is characterized exactly by countable support.

The second part of the paper concerns topological variants of integral projective norm-attaining tensors. Following~\cite[Section~3]{ADGJR}, one may define integral projective norm attainment using the norm, weak, or weak-star Borel structure on the product of the unit balls. Since these Borel structures need not coincide globally, the corresponding classes could a priori be different. Nevertheless, we prove that Bochner integrability of $\varphi$ witnessing the well-definedness of the representation \eqref{def:eq-INA-pi} removes this dependence on topology.

\begin{introthm}\label{thm:mainB}
    Let $X$ and $Y$ be Banach spaces. Then
    \begin{enumerate}[label=(\alph*)]
        \itemsep0.25em
        \item $\INA_w(X\pten Y)=\INA_\pi(X\pten Y).$
        \item $\INA_{w^*} (X^*\pten Y^*)=\INA_w(X^*\pten Y^*)=\INA_\pi(X^* \pten Y^*).$
        \end{enumerate}
\end{introthm}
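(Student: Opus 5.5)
The idea is to transport a witnessing measure from one Borel structure to another, using the fact that Bochner integrability of $\varphi$ forces the measure to live, up to a null set, on a separable piece of $B_X\times B_Y$ on which all the relevant Borel structures agree.

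First, the easy inclusions. The norm topology is finer than the weak topology, and on a dual ball the weak topology is finer than the weak-star topology. Hence every weak (resp.\ weak-star) Borel set is norm Borel (resp.\ weak Borel). A measure witnessing $u\in\INA_w$ is defined on the weak Borel $\sigma$-algebra, so we must extend it to the norm Borel sets, and similarly in the other cases. So the genuine direction is always: given a measure on a coarser $\sigma$-algebra $\Sigma_c$ with $\varphi$ Bochner integrable, produce a norm-Borel measure with the same total mass and the same barycenter. Conversely, a norm-Borel witness $\mu$ restricts to the coarser $\sigma$-algebra. Bochner integrability still needs checking after restriction, and I would obtain it from the same separable-reduction argument run in reverse.

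The key step is the separable reduction. Bochner integrability of $\varphi$ with respect to $\mu$ on $\Sigma_c$ means $\varphi$ is $\mu$-strongly measurable: $\varphi$ is an a.e.\ limit of $\Sigma_c$-simple functions. So there is a $\mu$-null set $N\in\Sigma_c$ such that $\varphi(B_X\times B_Y\setminus N)$ lies in a separable subspace $W\subseteq X\pten Y$. I would then show that $K=\{(x,y): x\otimes y\in W\}$ is contained, up to a trivial part, in $X_0\times Y_0$ for separable subspaces $X_0\subseteq X$, $Y_0\subseteq Y$. For $x,y\neq0$, the elementary tensor $x\otimes y$ determines the line spans of $x$ and $y$. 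Taking a countable dense set in $W$ and finite-dimensional approximations (e.g.\ via slices $(\mathrm{id}\otimes y^*)$), $x$ lies in the closed span of the countably many "first components" appearing in those representations. The points with $x=0$ or $y=0$ contribute $\varphi=0$ and can be collapsed harmlessly, e.g.\ by moving that mass to $(0,0)$, which changes neither $\|\mu\|$ nor the integral.

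The weak and norm Borel $\sigma$-algebras then coincide on the separable set $B_{X_0}\times B_{Y_0}$, because closed balls of $X_0$ are weakly closed and generate its norm Borel sets. The weak-star case is analogous for $X^*,Y^*$: on a norm-separable subset of a dual ball, norm-closed balls $\{x^*:\|x^*-x_0^*\|\le r\}$ are weak-star closed. So the countably many balls with centres in a dense subset and rational radii give weak-star Borel sets generating the norm Borel sets there.

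The main obstacle is measurability. The set $B_{X_0}\times B_{Y_0}$, and the good set, must themselves be $\Sigma_c$-measurable, or at least contain a $\Sigma_c$-measurable set of full measure; only then can we define $\tilde\mu(A)=\mu(A\cap G)$ for norm-Borel $A$. I would handle this by taking $G$ to be the complement of $N$ intersected with $\varphi^{-1}(W)$. Here $\varphi^{-1}$ of norm-closed sets is $\Sigma_c$-measurable on the relevant piece, since $\varphi$ is the a.e.\ limit of $\Sigma_c$-simple functions. With that, on $G$ the trace $\sigma$-algebras agree, so $\tilde\mu$ is a norm-Borel measure. The simple functions approximating $\varphi$ are norm-Borel simple, so $\varphi$ remains Bochner integrable with the same integral $u$ and $\|\tilde\mu\|=\|\mu\|=\|u\|_\pi$. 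This proves (a), and (b) follows by chaining the weak-star and weak steps.
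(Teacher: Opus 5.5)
Your proposal is correct, and its skeleton matches the paper's. Bochner integrability gives a $\Sigma_c$-null set off which $\varphi$ takes values in a separable $W$. A Hahn--Banach/slice argument then places the nondegenerate part of the support in $B_{X_0}\times B_{Y_0}$ with $X_0,Y_0$ separable. On that piece the coarse and norm Borel structures agree, so $\mu$ extends to a norm-Borel measure with the same mass and barycenter.

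The genuine difference is how you prove that the two Borel structures agree. The paper invokes Edgar's theorem (after a Kadec renorming of $X_0\oplus_\infty Y_0$) in the weak case, and Talagrand's theorem $\baire(Z,w^*)=\borel(Z,\|\cdot\|)$ in the weak-star case. You use only two elementary facts. First, every relatively open subset of a norm-separable set $S$ is a countable union of traces of closed max-norm balls with centres in a countable dense subset and rational radii. Second, such balls are weakly closed, and weak-star closed in a dual because the dual norm is weak-star lower semicontinuous. Only Borel (not Baire) $\sigma$-algebras enter the theorem, so this gives exactly what is needed: $\borel(S,w)=\borel(S,\|\cdot\|)$ and $\borel(S,w^*)=\borel(S,\|\cdot\|)$ for every norm-separable $S$. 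The cited theorems buy strictly more (global coincidence on Kadec spaces, equality with the Baire $\sigma$-algebra) than the argument uses, so your version is self-contained at no cost.

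Some smaller differences and corrections:
\begin{itemize}
\item The paper handles the degenerate points by observing that $\|\mu\|=\|u\|_\pi$ forces $\mu$ to be carried by $S_X\times S_Y$. This is cleaner than collapsing mass to $(0,0)$.
\item In your version, the set on which the trace $\sigma$-algebras are claimed to agree must be $G$ minus $\{x=0\}\cup\{y=0\}$, since $G$ itself need not be separable.
\item Intersecting with $\varphi^{-1}(W)$ is redundant, since $N^c\subseteq\varphi^{-1}(W)$ by the choice of $N$.
\item Your ``reverse'' argument for $\INA_\pi\subseteq\INA_w\subseteq\INA_{w^*}$, which the paper simply takes from earlier work, does go through. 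Norm-closed separable sets such as $B_{X_0}\times B_{Y_0}$ are weakly closed, resp.\ weak-star $F_{\sigma\delta}$ in a dual, so they supply coarse-Borel sets of full measure.
\end{itemize}
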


The key point is local rather than global. A Bochner-integrable tensor-valued map is essentially separably valued. This implies that, modulo a null set, the representing measure is carried by $B_{X_0}\times B_{Y_0}$ for suitable separable subspaces $X_0\subseteq X$ and $Y_0\subseteq Y$. On this localized support, the weak and norm Borel structures coincide by a result of Edgar \cite{Edgar}. In the dual setting, Talagrand's theorem in \cite{Talagrand} gives the corresponding coincidence for the relative weak-star Borel structure on norm-separable subspaces.

The two relaxations considered here therefore behave in opposite ways: allowing arbitrary positive representing measures genuinely enlarges the class of projective norm-attaining tensors, whereas weakening the underlying Borel topology does not, once Bochner integrability is imposed.

Combining Theorem~\ref{thm:mainB} with the weak-star representation theorem \cite[Proposition~3.2]{ADGJR} gives the following consequence, which shows that the separation in Theorem~\ref{thm:mainA} can occur even when every tensor is integral projective norm-attaining.

\begin{introcor}\label{cor:mainB}
Let $X$ be an infinite-dimensional separable reflexive Banach space with the approximation property. Then $X$ admits an equivalent norm such that, for the resulting space $Z$,
\[
 \operatorname{NA}_{\pi}(Z\widehat\otimes_{\pi}Z)
 \subsetneq
 \operatorname{INA}_{\pi}(Z\widehat\otimes_{\pi}Z)
 =
 Z\widehat\otimes_{\pi}Z.
\]
\end{introcor}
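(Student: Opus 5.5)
We derive Corollary~\ref{cor:mainB} from Theorem~\ref{thm:mainA}, Theorem~\ref{thm:mainB}(b), and the weak-star representation theorem \cite[Proposition~3.2]{ADGJR}.

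First we fix the renorming. Apply Theorem~\ref{thm:mainA} to $X$ to get an equivalent norm $\vertiii{\cdot}$ and $Z=(X,\vertiii{\cdot})$. Renorming is an isomorphism, so $Z$ is still infinite-dimensional, separable and reflexive, and it keeps the approximation property. Take $\alpha$ to be Lebesgue measure on $[0,1]$. It is not countably supported, so Theorem~\ref{thm:mainA} gives $\Psi(\alpha)\in \INA_\pi(Z\pten Z)\setminus\NA_\pi(Z\pten Z)$. This is the strict inclusion.

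Next we show $\INA_\pi(Z\pten Z)=Z\pten Z$ using duality. By reflexivity $Z=W^*$ with $W=Z^*$, and $W$ is also separable and reflexive. Under the canonical identification, $Z\pten Z=W^*\pten W^*$, with the same projective norm, the same elementary tensors, and $B_Z=B_{W^*}$.

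\begin{itemize}
\item \cite[Proposition~3.2]{ADGJR} should say that, for dual spaces under suitable hypotheses (separability, approximation property, Radon--Nikod\'ym-type conditions), every element of $W^*\pten W^*$ has an optimal representation by a measure on $B_{W^*}\times B_{W^*}$ with its weak-star topology. These balls are weak-star compact, so the measure can be taken to be a maximizing Radon measure, obtained from a norming functional in $(W^*\pten W^*)^*$. Thus $\INA_{w^*}(W^*\pten W^*)=W^*\pten W^*$.
\item We must check that the hypotheses of that proposition hold for $W$. Reflexivity and separability of $Z$ give them: $Z$ has the RNP, and the approximation property of $Z$ makes $Z\pten Z$ embed into the dual of the injective tensor product $Z^*\widehat\otimes_\varepsilon Z^*$. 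If the proposition is stated with the approximation property on $W^*=Z$ rather than on $W$, our hypothesis on $X$ fits directly.
\item Theorem~\ref{thm:mainB}(b) then gives $\INA_{w^*}(W^*\pten W^*)=\INA_\pi(W^*\pten W^*)$, which is $\INA_\pi(Z\pten Z)$ under the identification. Reflexivity makes the weak and weak-star topologies on $B_Z$ agree, so the weak-star Borel structure on $B_Z\times B_Z$ is exactly the one used in $\INA_w$.
\end{itemize}

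Combining the two paragraphs gives $\NA_\pi\subsetneq\INA_\pi=Z\pten Z$.

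The main obstacle is matching the exact hypotheses of \cite[Proposition~3.2]{ADGJR} and confirming that its output is a Bochner-integrable representation. Theorem~\ref{thm:mainB} requires Bochner integrability. Separability of $Z\pten Z$ should make the Pettis measurability theorem give Bochner integrability from weak-star measurability and boundedness of $\varphi$.
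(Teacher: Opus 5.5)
Your proposal is correct and follows the paper's route. Theorem~\ref{thm:mainA} with Lebesgue measure (equivalently Corollary~\ref{cor:mainA}) gives the strict inclusion, renorming preserves separability, reflexivity and the approximation property, \cite[Proposition~3.2]{ADGJR} gives $\INA_{w^*}(Z\pten Z)=Z\pten Z$ via $Z=(Z^*)^*$, and Theorem~\ref{thm:mainB}(b) then yields $\INA_\pi(Z\pten Z)=Z\pten Z$.

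Your worry about Bochner integrability is unnecessary, since it is already part of the definition of $\INA_{w^*}$ used here. Your Pettis-theorem fallback also works: $Z\pten Z$ is separable, and the weak and norm Borel structures agree on $B_Z\times B_Z$.
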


The paper is organized as follows. In Section~\ref{sec:renorming}, we construct the moment embedding of $\mathcal P([0,1])$ and realize its image as an exposed face of an equivalent unit ball. Section~\ref{sec:no-countable-representation} introduces the diagonal tensor and proves Theorem~\ref{thm:mainA} by comparing the diagonal coupling with countable mixtures of product measures. In Section~\ref{sec:topological-variants} we prove the topology-independence result and derive Corollary~\ref{cor:mainB}.

\section{Moment embeddings and exposed-face renormings}\label{sec:renorming}

The purpose of this section is to construct, inside an arbitrary infinite-dimensional Banach space after an equivalent renorming, an exposed face affinely homeomorphic to the space $\mathcal P([0,1])$ of Borel probability measures on $[0,1]$. The moment coordinates below serve two roles: they separate probability measures and, later, recover a measure on $[0,1]^2$ from the associated tensor. The exposed-face property will force every elementary tensor occurring in an optimal countable representation to correspond to a pair of points in this copy of $\mathcal P([0,1])$.

\subsection{The moment embedding of $\mathcal{P}([0,1])$}
Let $X$ be an infinite-dimensional Banach space. Let $(e_n)$ be a normalized basic sequence in $X$, and let $E := \overline{\Span} \{e_n : n \in \mathbb{N}\} \subseteq X$. Let $(e_n^*) \subseteq E^*$ be the coefficient functionals, and extend each $e_n^*$ to $X^*$ by the Hahn-Banach theorem.

For each $t\in [0,1]$, define
\begin{equation}\label{eq:def-phi}
  \phi(t): = \sum_{n=1}^\infty \frac{1}{2^n} t^n e_n \in X.
\end{equation}
Notice that $\phi: [0,1] \to X$ is norm continuous and $\alpha$-Bochner integrable for every $\alpha \in \mathcal{P}([0,1])$. 

Define
\begin{equation}\label{eq:def-Phi}
  \Phi:\mathcal{P}([0,1])\to X,
  \qquad
  \Phi(\alpha):= \sum_{k=1}^\infty \frac{1}{2^k} \left( \int_{[0,1]} t^k \, d\alpha(t)\right) e_k.
\end{equation}
Note that $\Phi(\delta_t) = \phi(t)$ for every $t \in [0,1]$, where $\delta_t$ is the Dirac mass at $t$. We equip $\mathcal{P}([0,1]) \hookrightarrow C([0,1])^*$ with the weak-star topology.

\begin{lemma}\label{lem:relation-between-Phi-phi}
 Let $\phi$ and $\Phi$ be defined as in \eqref{eq:def-phi} and \eqref{eq:def-Phi}, respectively. Given $\alpha \in \mathcal{P}([0,1])$, 
    \[
    \Phi(\alpha) = \int_{[0,1]} \phi(t) \, d\alpha(t).
    \]
\end{lemma}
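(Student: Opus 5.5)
The plan is to compute the Bochner integral of $\phi$ by interchanging it with the series defining $\phi$. Then Lemma~\ref{lem:relation-between-Phi-phi} reduces to the scalar identity $\int t^k\,d\alpha$ for each coefficient.

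First I will record integrability. For each $n$, the map $\phi_n(t) := \frac{1}{2^n} t^n e_n$ is norm continuous from $[0,1]$ into the one-dimensional space $\Span\{e_n\}$. Its range is separable, so $\phi_n$ is $\alpha$-Bochner integrable. Since Bochner integration commutes with bounded linear maps on finite-dimensional ranges (or directly, since $\phi_n = f\cdot e_n$ with $f(t)=2^{-n}t^n$ scalar and integrable), we have
\[
\int_{[0,1]} \phi_n(t)\,d\alpha(t) = \frac{1}{2^n}\Big(\int_{[0,1]} t^n\,d\alpha(t)\Big) e_n .
\]

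Next I will pass to the full series. The partial sums $S_N(t) := \sum_{n=1}^N \phi_n(t)$ converge to $\phi(t)$ uniformly on $[0,1]$. Indeed, $\|\phi(t)-S_N(t)\| \le \sum_{n>N} 2^{-n}\|e_n\| = 2^{-N}$, using that $(e_n)$ is normalized. This uniform bound gives the norm continuity of $\phi$ (as a uniform limit of continuous maps), and hence its $\alpha$-Bochner integrability, as noted in the paper. It also gives
\[
\Big\| \int \phi\,d\alpha - \int S_N\,d\alpha \Big\| \le \int \|\phi - S_N\|\,d\alpha \le 2^{-N},
\]
because $\alpha$ is a probability measure. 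By linearity of the Bochner integral,
\[
\int S_N\,d\alpha = \sum_{k=1}^N \frac{1}{2^k}\Big(\int t^k\,d\alpha\Big)e_k .
\]
Letting $N\to\infty$, these partial sums converge in norm to $\Phi(\alpha)$. The series defining $\Phi(\alpha)$ converges absolutely, since $|\int t^k\,d\alpha| \le 1$. Comparing the two limits yields $\int \phi\,d\alpha = \Phi(\alpha)$.

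The only point requiring any care is the justification of the limit interchange. Uniform convergence together with finiteness of $\alpha$ handles it directly, via the standard inequality $\|\int f\,d\alpha\| \le \int \|f\|\,d\alpha$. So I expect no real obstacle.

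As an alternative check, one can apply the coefficient functionals $e_k^*$ (extended to $X^*$). Bochner integrals commute with bounded functionals, so
\[
e_k^*\Big(\int\phi\,d\alpha\Big) = \int e_k^*(\phi(t))\,d\alpha = 2^{-k}\int t^k\,d\alpha .
\]
Both vectors lie in $E$, a closed subspace containing the range of $\phi$, and elements of $E$ are determined by their coefficients. This again gives the identity.
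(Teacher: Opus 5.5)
Your proposal is correct and takes the same route as the paper: pull the scalar $2^{-n}\int t^n\,d\alpha$ out of the Bochner integral of each term $2^{-n}t^ne_n$, then integrate the uniformly convergent series term by term. The only difference is that you make the interchange explicit via $\sup_t\|\phi(t)-S_N(t)\|\le 2^{-N}$ and $\|\int f\,d\alpha\|\le\int\|f\|\,d\alpha$, where the paper just asserts it. Your coefficient-functional check is also valid, but not needed.
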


\begin{proof}
For each $n\in\mathbb{N}$, the map $a \in \mathbb{K} \mapsto a e_n \in X$ is bounded linear; hence it commutes with Bochner integration. That is,
\[
\int_{[0,1]} t^n e_n \, d\alpha(t)= \left( \int_{[0,1]} t^n \, d\alpha(t) \right) e_n.
\] 
Since the series defining $\phi$ converges uniformly on $[0,1]$, it may be integrated term by term. Thus, 
\begin{align*}
\Phi(\alpha) =   \int_{[0,1]} \left( \sum_{k=1}^\infty \frac{1}{2^k }t^k e_k \,  \right) d\alpha(t) = \int_{[0,1]} \phi(t)\,d\alpha(t).
\end{align*}
This completes the proof.
\end{proof}

\begin{lemma}\label{lem:barycenter_continuity}
Let $K$ be a compact Hausdorff space, let $E$ be a Banach space, and let $f\colon K\to E$ be norm continuous. Then the map
\[
    \mu \in \mathcal{P}(K) \mapsto \int_K f(t)\,d\mu(t),
\]
is continuous from the weak-star topology of $\mathcal P(K)$ to the norm topology of $E$.
\end{lemma}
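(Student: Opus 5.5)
The plan is to approximate $f$ uniformly by finitely-valued-type continuous maps, and use scalar weak-star continuity on each piece. Fix $\mu_0\in\mathcal P(K)$ and $\varepsilon>0$. Since $K$ is compact and $f$ is norm continuous, $f(K)$ is norm compact. Hence $f$ is uniformly approximable by functions of the form
\[
g=\sum_{i=1}^m \psi_i\, v_i ,
\]
where $\psi_1,\dots,\psi_m$ is a partition of unity on $K$ and $v_i\in E$.

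To build $g$, cover $K$ by finitely many open sets $U_i$ on which the oscillation of $f$ is less than $\varepsilon$, and pick points $t_i\in U_i$. Compact Hausdorff spaces are normal, so a partition of unity $(\psi_i)$ subordinate to $(U_i)$ exists. Set $v_i=f(t_i)$. For each $t\in K$, the value $f(t)-g(t)=\sum_i\psi_i(t)(f(t)-f(t_i))$ is a convex combination of vectors of norm less than $\varepsilon$. Therefore $\|f-g\|_\infty\le\varepsilon$.

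Next, for any $\mu\in\mathcal P(K)$, both $f$ and $g$ are continuous, hence Bochner integrable. The triangle inequality for Bochner integrals gives
\[
\Big\|\int_K f\,d\mu-\int_K g\,d\mu\Big\|\le\|f-g\|_\infty\le\varepsilon.
\]
Since $\int_K g\,d\mu=\sum_i\big(\int_K\psi_i\,d\mu\big)v_i$ (bounded linear maps commute with the integral, as in Lemma~\ref{lem:relation-between-Phi-phi}), we obtain
\[
\Big\|\int_K g\,d\mu-\int_K g\,d\mu_0\Big\|\le\sum_{i=1}^m\Big|\int_K\psi_i\,d\mu-\int_K\psi_i\,d\mu_0\Big|\,\|v_i\|.
\]
The set of $\mu$ for which each of the $m$ differences on the right is below $\varepsilon/(1+m\max_i\|v_i\|)$ is a weak-star neighbourhood $W$ of $\mu_0$, because each $\psi_i\in C(K)$.

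Finally, for $\mu\in W$ the three estimates combine to give
\[
\Big\|\int_K f\,d\mu-\int_K f\,d\mu_0\Big\|<3\varepsilon,
\]
which proves continuity at $\mu_0$. Since $\mathcal P(K)$ need not be metrizable, the argument works with neighbourhoods rather than sequences. The only step needing any care is the partition-of-unity approximation; it is standard for compact Hausdorff spaces via Urysohn's lemma.
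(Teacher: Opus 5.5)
Your proof is correct and follows the paper's route: uniformly approximate $f$ by $g=\sum_j h_j\, f(t_j)$ built from a continuous partition of unity, use weak-star convergence of the finitely many scalar integrals $\int h_j\,d\mu$, and close with a three-term triangle inequality. The only differences are cosmetic: you use weak-star neighbourhoods where the paper uses nets, and you spell out the construction of the partition of unity, which the paper simply asserts.
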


\begin{proof}
Fix $\varepsilon>0$. There exist points $t_1,\ldots,t_m\in K$ and a continuous partition of unity $(h_j)_{j=1}^m$ on $K$ such that the finite-rank continuous function
\[
    g(t):=\sum_{j=1}^m h_j(t)f(t_j)
\]
satisfies $\|f-g\|_\infty<\varepsilon$.

Let $(\mu_i)$ be a net in $\mathcal{P}(K)$ converging to $\mu$ in the weak-star topology. Then
\[
\int_K g\,d\mu_i-\int_K g\,d\mu=\sum_{j=1}^m \left(\int_K h_j\,d\mu_i-\int_K h_j\,d\mu\right)f(t_j) \to 0
\]
in norm. Therefore,
\begin{align*}
&\left\|\int_K f\,d\mu_i-\int_K f\,d\mu\right\| \\
&\quad \le \left\|\int_K(f-g)\,d\mu_i\right\|
+\left\|\int_Kg\,d\mu_i-\int_Kg\,d\mu\right\| +
\left\|\int_K(g-f)\,d\mu\right\| \\
&\quad \le
2\varepsilon+
\left\|\int_Kg\,d\mu_i-\int_Kg\,d\mu\right\|.
\end{align*}
This proves the desired continuity.
\end{proof}

\begin{proposition}\label{prop:moment-simplex}
Let $X$ be a Banach space and $\Phi : \mathcal{P}([0,1])\to X$ be the map from \eqref{eq:def-Phi}. Then $\Phi$ is affine, injective, and continuous. Consequently,
\[
  C:=\Phi(\mathcal{P}([0,1]))\subset X
\]
is a norm-compact convex set, and $\Phi:\mathcal{P}([0,1])\to C$ is a homeomorphism.
\end{proposition}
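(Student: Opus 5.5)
We prove the three properties of $\Phi$ one at a time and then get the topological conclusions from compactness.

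\emph{Affine and continuous.} By Lemma~\ref{lem:relation-between-Phi-phi}, $\Phi(\alpha)=\int_{[0,1]}\phi\,d\alpha$, and $\phi$ is norm continuous on the compact space $[0,1]$. The integral is linear in the measure, so for $\alpha,\beta\in\mathcal P([0,1])$ and $\lambda\in[0,1]$ we have $\Phi(\lambda\alpha+(1-\lambda)\beta)=\lambda\Phi(\alpha)+(1-\lambda)\Phi(\beta)$. One can also see this coordinatewise from \eqref{eq:def-Phi}, since each moment $\int t^k\,d\alpha$ is affine in $\alpha$. Continuity from the weak-star topology to the norm topology is exactly Lemma~\ref{lem:barycenter_continuity}, applied with $K=[0,1]$, $E=X$ and $f=\phi$.

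\emph{Injectivity.} Suppose $\Phi(\alpha)=\Phi(\beta)$. The series in \eqref{eq:def-Phi} converges in norm, because its coefficients are bounded by $2^{-k}$ and $\|e_k\|=1$. So we may apply the coefficient functional $e_k^*$, which is continuous on $E$ and satisfies $e_k^*(e_j)=\delta_{kj}$. This gives
\[
\int_{[0,1]} t^k\,d\alpha=\int_{[0,1]} t^k\,d\beta \quad\text{for all } k\ge1.
\]
The zeroth moments also agree, since both measures are probabilities. Hence $\int p\,d\alpha=\int p\,d\beta$ for every polynomial $p$. By the Weierstrass theorem the polynomials are dense in $C([0,1])$, so $\alpha$ and $\beta$ agree as functionals on $C([0,1])$. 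The Riesz representation theorem then gives $\alpha=\beta$.

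\emph{Topological conclusions.} $\mathcal P([0,1])$ is convex, and it is weak-star compact by Banach--Alaoglu, being a weak-star closed subset of the unit ball of $C([0,1])^*$. Since $\Phi$ is affine, $C=\Phi(\mathcal P([0,1]))$ is convex. Since $\Phi$ is continuous, $C$ is norm compact. Finally, $\Phi$ is a continuous bijection from a compact space onto the Hausdorff space $(C,\|\cdot\|)$, so it is a homeomorphism.

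The only step that needs care is injectivity: one must check that applying $e_k^*$ term by term to the series is legitimate, which follows from its norm convergence, and then invoke the moment determinacy of measures on $[0,1]$. Neither point presents a real obstacle.
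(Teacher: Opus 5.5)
Your proof is correct and follows the paper's argument step by step. The common steps are: affineness from the definition, continuity via Lemma~\ref{lem:barycenter_continuity}, injectivity by reading off the moments (with $k=0$ supplied by normalization) and using uniform density of polynomials, and the homeomorphism from compactness of $\mathcal P([0,1])$ together with the Hausdorff property. Your extra remarks make explicit what the paper leaves implicit: norm convergence justifying the use of $e_k^*$, Banach--Alaoglu for compactness, and convexity of $C$ from affineness.
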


\begin{proof}
Affineness follows immediately from the definition. By Lemma \ref{lem:barycenter_continuity}, the continuity of $\Phi$ follows immediately from the continuity of $\phi: [0,1]\to X$. 

Next, suppose that $\Phi(\alpha)=\Phi(\beta)$. Then, for every $k\ge1$,
\[
  \int_{[0,1]} t^k\,d\alpha(t)
  =\int_{[0,1]} t^k\,d\beta(t).
\]
Since $\alpha$ and $\beta$ are probability measures, the equality also holds for $k=0$. Thus the two positive measures have the same integral against every real-valued polynomial. By the Stone--Weierstrass theorem, the real-valued polynomials are uniformly dense in $C([0,1]; \mathbb{R})$, and hence $\alpha=\beta$.

Finally, $\mathcal{P}([0,1])$ is compact in the weak-star topology, and $X$ is Hausdorff. Therefore, the continuous injection
$\Phi:\mathcal{P}([0,1])\to X$ is a homeomorphism onto its image. In particular, $C$ is norm compact.
\end{proof}

\subsection{An exposed-face renorming}
Given a Banach space $X$ over the field $\mathbb{K}$, consider $\mathbb{K}\oplus X$. Unless otherwise specified, the summand $\oplus$ is always understood as the \emph{max-sum}. Set 
\begin{equation}\label{def-F}
F:=\{(1,\Phi(\alpha)): \alpha \in \mathcal{P}([0,1])\} \subseteq \mathbb{K} \oplus X,
\end{equation}
and 
\begin{equation}\label{eq:unit-ball}
  \mathsf D
  :=\overline{\co}(\mathbb{T} F \cup(\{0\}\times B_X)) = \overline{\aco} (F \cup (\{0\} \times B_X) ), 
\end{equation}
where $\mathbb{T} := \{ \omega \in \mathbb{K} : |\omega|=1\}$. 

Note that each $e_n^*\in X^*$, $n\in\mathbb N$, admits a canonical extension to $\mathbb K\oplus X$ given by $e_n^*(t,x):=e_n^*(x)$ for every $(t,x)\in\mathbb K\oplus X.$ We also define $e_0^*:\mathbb K\oplus X\to\mathbb K$ by $e_0^*(t,x)=t$ for every $(t,x) \in \mathbb{K}\oplus X$. By a slight abuse of notation, we regard $\{e_n^*:n\in\mathbb N\cup\{0\}\}$ as a subset of $(\mathbb K\oplus X)^*$.

\begin{lemma}\label{lem:renorming}
Let $X$ be a Banach space and $\mathsf D$ be the set as in \eqref{eq:unit-ball}. Then the Minkowski functional $\mu_{\mathsf{D}}$ of $\mathsf{D}$ is an equivalent norm on $\mathbb{K}\oplus X$. Moreover, for $\omega \in \mathbb{T}$, we have $\omega F = \{ x \in \mathsf{D} : e_0^* (x) =\omega \}$. In particular, 
\[
F=\{ x \in B_{X_\mathsf D} : \operatorname{Re}e_0^* (x)=1 \}
\]
is an exposed face of $B_{X_{\mathsf D}}$.
\end{lemma}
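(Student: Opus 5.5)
We first show that $\mu_{\mathsf D}$ is an equivalent norm by sandwiching $\mathsf D$ between two multiples of the max-sum unit ball $B_{\mathbb K\oplus X}$. Since $C=\Phi(\mathcal P([0,1]))$ is norm compact, $R:=\sup_{\alpha}\|\Phi(\alpha)\|<\infty$. In fact $R\le 1$, because $\|\Phi(\alpha)\|\le\sum_k 2^{-k}\le 1$ by the normalization of $(e_k)$. Every element of $\mathbb T F\cup(\{0\}\times B_X)$ then has max-norm at most $1$, so $\mathsf D\subseteq B_{\mathbb K\oplus X}$; here we use that $B_{\mathbb K\oplus X}$ is closed, convex and balanced. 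For the reverse inclusion, take $(t,x)$ with $|t|\le 1$ and $\|x\|\le 1$, and write $t=s\omega$ with $s\in[0,1]$ and $\omega\in\mathbb T$. Fix any $\alpha_0\in\mathcal P([0,1])$ and set $p:=\Phi(\alpha_0)$. Then
\[
(t,x)=s\,\omega(1,p)+(1-s)\Bigl(0,\tfrac{x-s\omega p}{1-s}\Bigr)
\]
when $s<1$. The second vector has norm at most $(1+R)/(1-s)$, which is unbounded as $s\to 1$. It is cleaner to argue as follows: $\tfrac12(t,x)=\tfrac12\,\omega s(1,p)+\tfrac12(0,x-s\omega p)$. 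Here $\|x-s\omega p\|\le 2$, so $(0,(x-s\omega p)/2)\in\{0\}\times B_X$. Also $\omega s(1,p)\in\aco(F)$. Hence $\tfrac14 (t,x)$ is a convex combination of elements of $\mathsf D$ (with weights $\tfrac12,\tfrac12$, after absorbing factors using balancedness), so $\tfrac14 B_{\mathbb K\oplus X}\subseteq\mathsf D$. Since $\mathsf D$ is closed, absolutely convex and bounded, with nonempty interior, $\mu_{\mathsf D}$ is a norm with $\|\cdot\|_\infty\le\mu_{\mathsf D}\le 4\|\cdot\|_\infty$, and its closed unit ball is exactly $\mathsf D$.

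Next we identify the slice $\{z\in\mathsf D: e_0^*(z)=\omega\}$. The inclusion $\omega F\subseteq\mathsf D$ with $e_0^*=\omega$ is clear. For the converse, first treat the non-closed convex hull. A point $z\in\co(\mathbb TF\cup(\{0\}\times B_X))$ has the form $z=\sum_i\lambda_i\omega_i(1,\Phi(\alpha_i))+\lambda_0(0,y)$. Then $e_0^*(z)=\sum_i\lambda_i\omega_i$ has modulus at most $\sum_i\lambda_i\le 1$. Equality $e_0^*(z)=\omega$ forces $\lambda_0=0$ and $\omega_i=\omega$ for every $i$ with $\lambda_i>0$; this is the strict convexity of the scalar unit disc at the extreme point $\omega$. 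Affineness of $\Phi$ then gives $z=\omega(1,\Phi(\sum_i\lambda_i\alpha_i))\in\omega F$.

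The main obstacle is passing to the closure. A limit point with $e_0^*(z)=\omega$ may be approached by points $z_j$ with $e_0^*(z_j)\ne\omega$, so the exact argument above does not apply directly; we replace it by a quantitative version. Write $z_j=\sum_i\lambda_i^j\omega_i^j(1,\Phi(\alpha_i^j))+\lambda_0^j(0,y_j)$ with $e_0^*(z_j)\to\omega$. Then $\operatorname{Re}(\bar\omega e_0^*(z_j))=\sum_i\lambda^j_i\operatorname{Re}(\bar\omega\omega^j_i)\to 1$. Consequently $\lambda_0^j\to0$, and the total weight $\sum_i\lambda_i^j|\omega_i^j-\omega|$ tends to $0$, using $|\omega_i-\omega|^2=2-2\operatorname{Re}(\bar\omega\omega_i)$ together with Cauchy--Schwarz. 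Because $\|\Phi\|\le 1$ and $\|y_j\|\le 1$, it follows that
\[
\Bigl\|z_j-\omega\Bigl(1,\Phi\bigl(\textstyle\sum_i\lambda_i^j\alpha_i^j/\Lambda_j\bigr)\Bigr)\Bigr\|\to0,
\]
where $\Lambda_j=\sum_{i\ge1}\lambda_i^j\to1$. The approximants lie in $\omega F$, which is compact by Proposition~\ref{prop:moment-simplex}, so $z\in\omega F$. Taking $\omega=1$ yields the final statement. Indeed, $\operatorname{Re}e_0^*\le|e_0^*|\le 1$ on $\mathsf D$, since $\mathsf D\subseteq B_{\mathbb K\oplus X}$. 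Moreover, $\operatorname{Re}e_0^*(z)=1$ with $|e_0^*(z)|\le1$ forces $e_0^*(z)=1$. Hence $F$ is the face of $B_{X_{\mathsf D}}$ exposed by the norm-one functional $\operatorname{Re}e_0^*$.
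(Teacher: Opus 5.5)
Your proof is correct and follows essentially the same route as the paper. The closure step matches the paper's argument: you force the weight on $\{0\}\times B_X$ to vanish, use $\sum_i\lambda_i^j|\omega_i^j-\omega|^2=2\Lambda_j-2\operatorname{Re}(\overline{\omega}\,e_0^*(z_j))$, renormalize by $\Lambda_j$, and invoke compactness of $\omega F$. The only cosmetic difference is in the norm-equivalence step: the paper observes that $(1,0)=(1,\Phi(\delta_0))\in F$ and so gets $\{(t,x):|t|+\|x\|\le1\}\subseteq\mathsf D$ directly, whereas your generic $\alpha_0$ gives the weaker but sufficient constant $\tfrac14$.
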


\begin{proof}
By definition, $\mathsf D$ is closed, convex, and balanced (symmetric in the real case). The set $\mathsf{D}$ is bounded as $\Phi(\mathcal{P}([0,1]) )$ is norm compact by Proposition \ref{prop:moment-simplex}. Since $(1,0)\in F$ and $\{0\}\times B_X\subset\mathsf D$, we have
\[
  \{(t,x)\in\R\oplus X : |t|+\lVert x\rVert \le1\}
  \subset\mathsf D.
\]
The Minkowski functional $\mu_{\mathsf{D}}$ of $\mathsf D$ is therefore a norm equivalent to the original product norm, and $\mathsf D$ is its closed unit ball. For simplicity, we denote by $X_{\mathsf D}$ the space $(\mathbb{K}\oplus X, \mu_{\mathsf{D}})$.

Notice that $e_0^* \in (X_{\mathsf D})^*$ satisfies $\|e_0^* \| \leq 1.$ Since $e_0^*(1,0)=1$, it follows that $\lVert e_0^*\rVert=1$. Fix $\omega \in \mathbb{T}$. The inclusion
\[
  \omega F\subseteq\{x \in {\mathsf D}:e_0^*(x)= \omega \}
\]
is immediate. For the reverse inclusion, let $x \in \mathsf D$ with $e_0^* (x) = \omega$ be given. Write $x=(\omega, u)$ for some $u \in X$. Choose
\[
 x_m=(a_m, u_m)\in
 \operatorname{co} (
 \mathbb T F\cup(\{0\}\times B_X) ) \subseteq  \mathbb{K} \oplus X
\]
for some $a_m\in \mathbb{K}$ and $u_m \in X$, such that $x_m\to x$. We may write
\[
 x_m
 =
 \sum_{j=1}^{N_m}
 r_{m,j}\omega_{m,j}(1,c_{m,j})
 +
 q_m(0,b_m),
\]
where
\[
 r_{m,j},q_m\geq0,\quad
 \sum_{j=1}^{N_m}r_{m,j}+q_m=1, \quad \omega_{m,j}\in\mathbb T,\quad
 c_{m,j}\in \Phi(\mathcal{P}([0,1])),\quad b_m\in B_X.
\]
Put
\[
 s_m:=\sum_{j=1}^{N_m}r_{m,j}=1-q_m.
\]
Since
\[
 a_m=\sum_{j=1}^{N_m}r_{m,j}\omega_{m,j}\to\omega
\]
and $|a_m|\leq s_m\leq1$, it follows that $s_m\to1$; hence, $q_m\to0$.

Moreover,
\begin{equation}\label{eq:rmj-wmj-w-estimate}
 \sum_{j=1}^{N_m} r_{m,j}|\omega_{m,j}-\omega|^2 = 2s_m-2\operatorname{Re}(\overline{\omega}a_m) \to 0.
\end{equation}
This implies that 
\begin{align}\label{eq:rmj-omega}
&\left\| \sum_{j=1}^{N_m} r_{m,j} \omega  c_{m,j} - u_m \right\| \nonumber \\
&\quad\quad\leq  \left\| \sum_{j=1}^{N_m} r_{m,j} \omega  c_{m,j} -\sum_{j=1}^{N_m} r_{m,j} \omega_{m,j }c_{m,j} \right\| \nonumber\\
&\quad\qquad\qquad + \left\| \left( \sum_{j=1}^{N_m} r_{m,j} \omega_{m,j }c_{m,j} + q_mb_m \right)- u_m\right\| + \|q_m b_m\| \to 0, 
\end{align}
where the first summand tends to $0$ because of \eqref{eq:rmj-wmj-w-estimate} together with the fact that $\Phi(\mathcal{P}([0,1]))$ is norm-compact (in particular, norm-bounded), the second summand is indeed $0$ by definition of $u_m$, and the third summand tends to $0$ since $q_m\to 0$. For all sufficiently large $m$, the element
\[
 c_m
 :=
 \frac{1}{s_m}
 \sum_{j=1}^{N_m}r_{m,j}c_{m,j}
\]
belongs to $\Phi(\mathcal{P}([0,1]))$ because of the convexity. Moreover, 
\begin{align*}
    \|\omega c_m -u_m\| \leq \left\|\omega \left (c_m - \sum_{j=1}^{N_m} r_{m,j} c_{m,j} \right) \right\| + \left\| \sum_{j=1}^{N_m} r_{m,j}\omega c_{m,j} - u_m \right\| \to 0
\end{align*}
where the first summand tends to $0$ as $s_m \to 1$ and the second summand converges to $0$ by \eqref{eq:rmj-omega}.  Since $u_m\to u$, it follows that the element $(1,c_m) \in F$ satisfies that 
\[
\omega(1,c_m)=(\omega, \omega c_m) \to (\omega ,u)=x.
\]
Since $\omega F$ is compact, it follows that $x\in\omega F$.
\end{proof}

Throughout the paper from now on, we fix the notation $X_{\mathsf D} := (\mathbb{K}\oplus X, \mu_{\mathsf{D}})$ as in the proof of Lemma \ref{lem:renorming}.

\section{Countable support and optimal tensor representations}\label{sec:no-countable-representation}
We now use the exposed face $F$ to compare optimal integral representations with countable optimal decompositions. For a measurable map $f$ and a measure $\mu$, we write $f_\sharp\mu$ for the {pushforward} of $\mu$ under $f$.

\subsection{Couplings and optimal integral representations}
Consider $\xi : \mathcal{P} ([0,1]) \to \mathbb K \oplus X$ defined as 
\[
\xi (\alpha) := (1, \Phi(\alpha)) \in \R \oplus X \quad (\alpha \in \mathcal{P}([0,1])).
\]
For simplicity, given $t \in [0,1]$, write 
\[
p(t) := \xi (\delta_t) = (1, \Phi(\delta_t) ) = (1, \phi(t)).
\]
Observe from Lemma \ref{lem:relation-between-Phi-phi} that for every $\alpha \in \mathcal{P}([0,1])$, 
\begin{equation}\label{eq:relation-integral-xi}
    \int_{[0,1]} p(t) \, d\alpha(t)  = \left( 1, \int_{[0,1]} \phi(t) \, d\alpha(t) \right) = (1,\Phi(\alpha))= \xi (\alpha).
\end{equation}

Since the map $\phi$ is continuous, the map $t \mapsto p(t)$ is norm continuous as a map into $\mathbb{K} \oplus X$. By definition (see \eqref{def-F}), we have $p([0,1]) \subseteq F$.

 Since $\mathbb{K}\oplus X$ is linearly isomorphic to $X_{\mathsf D}$, the map $t \mapsto p(t) \in X_{\mathsf D}$ is still norm continuous; hence the following Bochner integral is well defined for every $\eta \in \mathcal{P}([0,1]^2)$:
\begin{equation*}
  \int_{[0,1]^2} p(t)\otimes p(s)\,d\eta(t,s)
  \in X_{\mathsf D} \pten X_{\mathsf D}.
\end{equation*}

\begin{proposition}\label{prop:Theta-continuous-injection}
    Let $\Theta : \mathcal{P}([0,1]^2) \to X_{\mathsf D} \pten X_{\mathsf D}$ be the map defined as
    \begin{equation}\label{eq:def-u}\tag{$\heartsuit$}
    \Theta(\eta) =   \int_{[0,1]^2} p(t)\otimes p(s)\,d\eta(t,s).
    \end{equation}
    Then $\Theta$ is an affine homeomorphism onto its image. Moreover, $\|\Theta (\eta) \|_\pi = 1$ for every $\eta \in \mathcal{P}([0,1]^2)$. 
\end{proposition}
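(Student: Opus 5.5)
We prove the three assertions of the statement in turn: affineness and continuity, the norm identity, and injectivity. The homeomorphism claim then follows because $\mathcal P([0,1]^2)$ is weak-star compact and the target is Hausdorff.

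\emph{Affineness and continuity.} Affineness is immediate from linearity of the Bochner integral in the measure. For continuity, we apply Lemma~\ref{lem:barycenter_continuity} with $K=[0,1]^2$, $E=X_{\mathsf D}\pten X_{\mathsf D}$ and $f(t,s)=p(t)\otimes p(s)$. This $f$ is norm continuous, since $p$ is norm continuous into $X_{\mathsf D}$ and $\|x\otimes y-x'\otimes y'\|_\pi\le \|x-x'\|\|y\|+\|x'\|\|y-y'\|$.

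\emph{Norm identity.} The upper bound is $\|\Theta(\eta)\|_\pi\le\int\|p(t)\|\|p(s)\|\,d\eta\le 1$, because $p(t)\in F\subseteq B_{X_{\mathsf D}}$. For the lower bound we use the norming bilinear form $e_0^*\otimes e_0^*$. Since $\|e_0^*\|=1$ in $(X_{\mathsf D})^*$ by Lemma~\ref{lem:renorming}, the functional $(x,y)\mapsto e_0^*(x)e_0^*(y)$ is a bilinear form of norm $1$, and hence defines an element of norm $1$ in $(X_{\mathsf D}\pten X_{\mathsf D})^*$. Evaluating it under the integral gives $\int e_0^*(p(t))e_0^*(p(s))\,d\eta=\eta([0,1]^2)=1$. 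Therefore $\|\Theta(\eta)\|_\pi=1$.

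\emph{Injectivity.} This is the main step; we recover the moments of $\eta$ by testing against the bilinear forms $e_j^*\otimes e_k^*$ for $j,k\ge 0$. Each $e_j^*$ extended to $\mathbb K\oplus X$ is bounded on $X_{\mathsf D}$, since the norms are equivalent, so $e_j^*\otimes e_k^*$ is a bounded functional on the projective tensor product and commutes with the Bochner integral. For $j\ge1$ we have $e_j^*(p(t))=e_j^*(\phi(t))=2^{-j}t^j$, using that the Hahn--Banach extensions agree with the coefficient functionals on $E$ and that $\phi(t)\in E$. Also $e_0^*(p(t))=1$. Putting $\tilde e_0^*=e_0^*$ and $\tilde e_j^*=2^je_j^*$, we obtain
\[
\langle \tilde e_j^*\otimes\tilde e_k^*,\Theta(\eta)\rangle=\int_{[0,1]^2} t^js^k\,d\eta(t,s)\qquad (j,k\ge0).
\]
Hence $\Theta(\eta)=\Theta(\eta')$ forces $\eta$ and $\eta'$ to agree on all monomials $t^js^k$. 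By Stone--Weierstrass, polynomials in two variables are dense in $C([0,1]^2)$, so $\eta=\eta'$ by the Riesz representation theorem. The only delicate point is verifying the duality pairing and that the extended functionals remain bounded on $X_{\mathsf D}$; both follow from the equivalence of norms established in Lemma~\ref{lem:renorming}.
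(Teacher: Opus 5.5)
Your proposal is correct and matches the paper's proof essentially step for step. The ingredients are the same: continuity from Lemma~\ref{lem:barycenter_continuity}, injectivity by testing against the bilinear forms $e_j^*\otimes e_k^*$ ($j,k\ge 0$) followed by Stone--Weierstrass, the homeomorphism from weak-star compactness, and the lower bound $\|\Theta(\eta)\|_\pi\ge 1$ via the norm-one form $(x,y)\mapsto e_0^*(x)e_0^*(y)$.
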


\begin{proof}
Affineness follows immediately from the definition. Since the map 
\[
(t,s) \in[0,1]^2 \mapsto p(t)\otimes p(s) \in X_{\mathsf D} \pten X_{\mathsf D}
\]
is norm continuous, Lemma \ref{lem:barycenter_continuity} shows that $\Theta$ is continuous.

    To prove the injectivity, suppose that $\Theta(\alpha)=\Theta(\beta)$ for some $\alpha,\beta \in \mathcal{P}([0,1]^2)$. Then applying the bilinear form $(x,y)\mapsto e_j^*(x) e_k^*(y)$, we obtain, for every $j,k\ge0$, that 
    \[
    \int_{[0,1]^2} \frac{1}{2^{j+k}} t^{j}s^k \,d\alpha(t,s) = \int_{[0,1]^2} \frac{1}{2^{j+k}} t^{j} s^k \,d\beta(t,s).
    \]
By the real Stone--Weierstrass theorem, such polynomials are uniformly dense in $C([0,1]^2; \mathbb{R})$. Therefore, $\alpha=\beta$ and the injectivity of $\Theta$ is proved. 

Since $\mathcal P([0,1]^2)$ is compact in the weak-star topology and $X_D\widehat{\otimes}_\pi X_D$ is Hausdorff, $\Theta$ is a homeomorphism onto its image.

To prove the ``moreover'' part, note that $\lVert p(t)\rVert_{X_{\mathsf D}}=1$ for every $t$. Thus, 
\[
  \lVert \Theta(\eta) \rVert_\pi
  \le\int_{[0,1]^2}\lVert p(t)\otimes p(s)\rVert_\pi\,d\eta(t,s)=1.
\]
Consider the continuous bilinear form
\[
  \mathcal A: X_{\mathsf D}\times X_{\mathsf D}\to\R,
  \qquad \mathcal A(x,y)=e_0^*(x)e_0^*(y).
\]
Because $\lVert e_0^*\rVert=1$, we have $\lVert\mathcal A\rVert=1$. Since $p(t) \in F$ for every $t \in [0,1]$, it follows that $\Ree e_0^* (p(t)) = 1$ for every $t \in [0,1]$ (Lemma \ref{lem:renorming}). Therefore,  
\begin{equation}\label{eq:applying-A}
  \mathcal A(\Theta(\eta) ) =\int_{[0,1]^2}e_0^*(p(t)) e_0^* (p(s))\,d\eta(t,s) =1.
\end{equation}
It follows that $1\le\lVert \Theta(\eta) \rVert_\pi$, and hence
$\lVert \Theta(\eta) \rVert_\pi=1$.
\end{proof}

\begin{proposition}\label{prop:integral-NA}
Let $\Theta : \mathcal{P}([0,1]^2) \to X_{\mathsf D} \pten X_{\mathsf D}$ be the affine homeomorphism from Proposition \ref{prop:Theta-continuous-injection}. Then $\Theta(\eta) \in \INA_\pi(X_{\mathsf D} \pten X_{\mathsf D})$ for every $\eta \in \mathcal{P}([0,1]^2)$. More precisely, let
\[
  \gamma:[0,1]^2 \to B_{X_{\mathsf D}} \times B_{X_{\mathsf D}},
  \qquad \gamma(t,s)=(p(t),p(s)),
\]
and set $\mu:=\gamma_\sharp \eta$. Then
\begin{equation}\label{eq:u-representation}
  \Theta(\eta) =\int_{B_{X_{\mathsf D}} \times B_{X_{\mathsf D}}}x\otimes y\,d\mu(x,y), \qquad \lVert\mu\rVert=1. 
\end{equation}
Moreover, if $\eta$ is nonatomic, then $\mu$ is a nonatomic Radon probability measure.
\end{proposition}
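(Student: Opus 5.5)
The plan is to verify the three requirements in the definition of $\INA_\pi$ directly for $\mu=\gamma_\sharp\eta$, and to reduce every measure-theoretic statement on $B_{X_{\mathsf D}}\times B_{X_{\mathsf D}}$ to the corresponding statement about $\eta$ on $[0,1]^2$ via the change-of-variables formula for pushforwards.

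First, $\gamma$ is norm continuous because $p$ is norm continuous into $X_{\mathsf D}$. Since $\|p(t)\|_{X_{\mathsf D}}=1$, $\gamma$ indeed takes values in $B_{X_{\mathsf D}}\times B_{X_{\mathsf D}}$, and it is Borel measurable for the product of the norm topologies. Hence $\mu=\gamma_\sharp\eta$ is a positive Borel measure with $\|\mu\|=\mu(B_{X_{\mathsf D}}\times B_{X_{\mathsf D}})=\eta([0,1]^2)=1$. Moreover, $\mu$ is concentrated on the compact set $K:=\gamma([0,1]^2)$. A Borel probability measure concentrated on a compact metrizable set is Radon, since such a measure on a compact metric space is automatically regular and the complement of $K$ is $\mu$-null. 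This gives the Radon property.

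Next comes Bochner integrability and the identity \eqref{eq:u-representation}. The map $\varphi(x,y)=x\otimes y$ is norm continuous on bounded sets. It is therefore Borel measurable, and it is separably valued on $K$ because $\varphi(K)$ is compact. It is also bounded by $1$ in $\|\cdot\|_\pi$, so it is $\mu$-Bochner integrable. By the change-of-variables formula for pushforward measures, which holds for Bochner integrals by first checking it on simple functions, or by noting that $\varphi\circ\gamma$ is continuous and $\eta$-integrable,
\[
\int_{B_{X_{\mathsf D}}\times B_{X_{\mathsf D}}} x\otimes y\,d\mu(x,y)=\int_{[0,1]^2}p(t)\otimes p(s)\,d\eta(t,s)=\Theta(\eta).
\]
Proposition~\ref{prop:Theta-continuous-injection} gives $\|\Theta(\eta)\|_\pi=1=\|\mu\|$, so $\Theta(\eta)\in\INA_\pi$.

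Finally, for nonatomicity I use the injectivity of $\gamma$, which follows from the injectivity of $t\mapsto p(t)$. This map is injective because $\Phi$ is injective (Proposition~\ref{prop:moment-simplex}) and $t\mapsto\delta_t$ is injective; alternatively, the first coordinate $e_1^*(\phi(t))=t/2$ already recovers $t$. Then $\mu(\{(x,y)\})=\eta(\gamma^{-1}(x,y))$, and $\gamma^{-1}(x,y)$ has at most one point, so it has $\eta$-measure zero when $\eta$ is nonatomic. I read ``nonatomic'' as ``no point masses'', which suffices here. If the stronger notion is wanted, I would argue as follows. $\gamma$ is a homeomorphism onto $K$, so each Borel set $A$ gives a Borel set $\gamma^{-1}(A)$. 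Any Borel $A$ with $\mu(A)>0$ then yields $\gamma^{-1}(A)$ of positive $\eta$-measure. Because $\eta$ is nonatomic, this set splits into two pieces of positive measure, and their $\gamma$-images are Borel by injectivity, so $\mu$ has no atoms.

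The main obstacle is the technical justification of the change of variables for Bochner integrals and the Radon property. Both are routine once one notes that all relevant maps are continuous and that the measure lives on a compact metrizable set.
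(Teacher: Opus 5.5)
Your proposal is correct and follows essentially the same route as the paper: push $\eta$ forward along $\gamma$, use compactness of $\gamma([0,1]^2)$ and continuity of $\varphi$ to get Bochner integrability with $\|\mu\|=1=\|\Theta(\eta)\|_\pi$, apply change of variables, and deduce nonatomicity from the injectivity of $p$ (read off from $e_1^*$). The differences are only cosmetic: you prove the Radon property directly, from regularity of finite Borel measures on metric spaces together with concentration on a compact set, where the paper cites the pushforward theorem for Radon measures; and you get strong measurability from Borel measurability plus separable range, where the paper invokes Pettis's theorem.
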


\begin{proof}
First, we check that the representation in \eqref{eq:u-representation} is well-defined. Let $\varphi : B_{X_{\mathsf D}} \times B_{X_{\mathsf D}} \to X_{\mathsf D} \pten X_{\mathsf D}$ be the map that sends $(x,y)$ to $x \otimes y$. Note that 
\begin{equation}\label{eq:mu-has-full-meausure-on-gamma-image}
\mu (\gamma ([0,1]^2 ) ) = \eta (\gamma^{-1} ( \gamma ([0,1]^2 ) ) ) = \eta ([0,1]^2 ) = 1.
\end{equation}
Since $\gamma$ is continuous, $\gamma ([0,1]^2)$ is a compact subset of $B_{X_{\mathsf D}} \times B_{X_{\mathsf D}}$. Since $\varphi$ is continuous, it follows that $\varphi ( \gamma ([0,1]^2)) $ is a compact subset of $X_{\mathsf D} \pten X_{\mathsf D}$. This, together with \eqref{eq:mu-has-full-meausure-on-gamma-image}, shows that $\varphi$ is $\mu$-essentially separably valued. The continuity of $\varphi$ guarantees that $\varphi$ is weakly $\mu$-measurable. Therefore, Pettis's measurability theorem yields that $\varphi$ is $\mu$-measurable. Moreover, 
\[
\int_{B_{X_{\mathsf D}} \times B_{X_{\mathsf D}}} \| \varphi (x,y) \|_\pi \, d\mu(x,y) \leq \mu (B_{X_{\mathsf D}} \times B_{X_{\mathsf D}}) =1;
\]
so $\varphi$ is $\mu$-Bochner integrable and the representation in \eqref{eq:u-representation} is well-defined. 

On the other hand, since $\eta$ is a Radon probability measure on $[0,1]^2$, the pushforward measure $\mu=\gamma_\sharp \eta$ is also a Radon probability measure on $B_{X_{\mathsf D}} \times B_{X_{\mathsf D}}$ \cite[Theorem 9.1.1]{Bogachev}. By the change-of-variables formula,
\[
  \int_{B_{X_{\mathsf D}} \times B_{X_{\mathsf D}}}x\otimes y\,d\mu(x,y)
  =\int_{[0,1]^2} p(t)\otimes p(s)\,d\eta(t,s)=\Theta(\eta),
\]
and $\Theta(\eta) \in \INA_\pi (X_{\mathsf D} \pten X_{\mathsf D})$. 

Assume now that $\eta$ is nonatomic. It remains to prove that $\mu$ is nonatomic. The map $p$ is injective: if $p(t_1)=p(t_2)$, then applying the coefficient functional $e_1^*$ gives $2^{-1} t_1=2^{-1} t_2$, and hence $t_1=t_2$. Consequently, $\gamma$ is injective. Since $\gamma$ is a continuous injection from the compact space $[0,1]^2$ into the Hausdorff space $B_{X_{\mathsf D}}\times B_{X_{\mathsf D}}$, it is a homeomorphism onto its image. The measure $\mu$ is supported on $\gamma([0,1]^2)$ and satisfies $\mu=\gamma_\sharp \eta$. Therefore $\mu$ is nonatomic. \end{proof}

\subsection{Countable mixtures of product measures}

To relate product measures to elementary tensors, we shall use the fact that
the Bochner integral of a tensor-product map factors as the tensor product of
the corresponding Bochner integrals. In the present setting,
it will yield from \eqref{eq:relation-integral-xi} and \eqref{eq:def-u} that 
\begin{equation}\label{eq:Theta-is-product-of-xi}
    \Theta(\alpha\times\beta)  = \xi(\alpha)\otimes\xi(\beta) \quad (\alpha,\beta  \in \mathcal{P}([0,1])).
\end{equation}
The formula follows readily
from the Bochner--Fubini theorem and the compatibility of Bochner
integration with bounded linear operators. Although it is presumably well
known, we have not been able to locate a precise reference for it in exactly
the form needed here, so we include the short proof for completeness.

\begin{lemma}[Tensor products of Bochner integrals]\label{lem:Fubini-Bochner}
    Let $X$ and $Y$ be Banach spaces, and let $(S,\mathcal{A},\mu)$ and $(T,\mathcal{B}, \nu)$ be finite measure spaces. Suppose that $f: S \to X$ and $g: T\to Y$ are Bochner integrable. Then the map 
    \[
    F: S\times T \to X\pten Y, \quad F(s,t)=f(s)\otimes g(t),
    \]
    is Bochner integrable with respect to $\mu \times \nu$, and 
    \[
    \int_{S\times T} F(s,t) \, d(\mu\times \nu)(s,t) = \left( \int_S f(s) \, d\mu(s) \right) \otimes \left( \int_T g(t) \, d\nu(t) \right). 
    \]
\end{lemma}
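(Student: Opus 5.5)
Since both $f$ and $g$ are Bochner integrable, the natural route is approximation by simple functions. The identity is bilinear in $(f,g)$, the bilinear map $(x,y)\mapsto x\otimes y$ is norm-one, and the tensor norm satisfies $\|x\otimes y\|_\pi=\|x\|\|y\|$. So I will prove the statement for simple functions, pass to the limit using these facts, and handle measurability along the way.

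First, choose $\mu$-simple functions $f_n\to f$ pointwise $\mu$-a.e. with $\|f_n(s)\|\le 2\|f(s)\|$ and $\int_S\|f_n-f\|\,d\mu\to 0$, and similarly $\nu$-simple $g_n\to g$ for $g$. Such sequences exist by the standard characterization of Bochner integrability. Put $F_n(s,t):=f_n(s)\otimes g_n(t)$. Each $F_n$ is a $\mu\times\nu$-simple function: if $f_n=\sum_i x_i\chi_{A_i}$ and $g_n=\sum_j y_j\chi_{B_j}$, then $F_n=\sum_{i,j}x_i\otimes y_j\,\chi_{A_i\times B_j}$ with $A_i\times B_j\in\mathcal A\otimes\mathcal B$. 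Directly from the definition of the integral of a simple function,
\[
\int_{S\times T}F_n\,d(\mu\times\nu)=\sum_{i,j}\mu(A_i)\nu(B_j)\,x_i\otimes y_j=\Big(\int_S f_n\,d\mu\Big)\otimes\Big(\int_T g_n\,d\nu\Big).
\]

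Next, I check that $F_n\to F$ pointwise $\mu\times\nu$-a.e. Take the exceptional sets $N_1\subseteq S$ with $\mu(N_1)=0$ and $N_2\subseteq T$ with $\nu(N_2)=0$. Outside $(N_1\times T)\cup(S\times N_2)$, which is $\mu\times\nu$-null, we have
\[
\|F_n-F\|_\pi\le\|f_n-f\|\,\|g_n\|+\|f\|\,\|g_n-g\|\to0.
\]
Hence $F$ is strongly $\mu\times\nu$-measurable, being an a.e. limit of simple functions. By Tonelli,
\[
\int_{S\times T}\|F\|_\pi\,d(\mu\times\nu)=\int_S\|f\|\,d\mu\int_T\|g\|\,d\nu<\infty,
\]
so $F$ is Bochner integrable.

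Finally, I pass to the limit in the integrals. By Tonelli again,
\[
\int_{S\times T}\|F_n-F\|_\pi\,d(\mu\times\nu)\le\|f_n-f\|_{L^1}\,\|g_n\|_{L^1}+\|f\|_{L^1}\,\|g_n-g\|_{L^1}.
\]
Here $\|g_n\|_{L^1}\le 2\|g\|_{L^1}$, so the right-hand side tends to $0$, and therefore $\int F_n\to\int F$ in norm. On the other side, $\int f_n\to\int f$ and $\int g_n\to\int g$ in norm, and $\otimes$ is jointly continuous. So the right-hand sides of the simple-function identity converge to $(\int f)\otimes(\int g)$, which proves the formula. Note that the convergence of the integrals rests only on this $L^1$ estimate; the domination bound $\|f_n\|\le 2\|f\|$ is needed solely to keep $\|g_n\|_{L^1}$ bounded.

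The main obstacle is the measurability step, namely confirming that $F$ is strongly measurable with respect to the product $\sigma$-algebra, possibly after completion. This is settled by exhibiting $F$ as the a.e. limit of the product-simple functions $F_n$. The only care needed is that the exceptional set is a genuine product-null set, which holds because $N_1\times T$ and $S\times N_2$ are measurable rectangles of measure zero.
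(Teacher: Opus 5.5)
Your proof is correct, but it reaches the formula by a different route from the paper. The measurability and integrability steps coincide with the paper's. Both arguments approximate $f$ and $g$ by simple functions, note that $f_n\otimes g_n$ is a simple function converging a.e. to $F$, and apply Tonelli to $\|F\|_\pi=\|f\|\,\|g\|$.

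The difference is in how the identity for the integral is obtained. The paper invokes the Bochner--Fubini theorem to write $\int F\,d(\mu\times\nu)$ as an iterated integral. It then uses twice that bounded operators commute with Bochner integration, first $x\mapsto x\otimes g(t)$ and then $y\mapsto\bigl(\int_S f\,d\mu\bigr)\otimes y$.

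You instead check the identity directly for the simple functions $F_n=f_n\otimes g_n$. You then pass to the limit using
$\int\|F_n-F\|_\pi\,d(\mu\times\nu)\le\|f_n-f\|_{L^1}\|g_n\|_{L^1}+\|f\|_{L^1}\|g_n-g\|_{L^1}$
together with the joint continuity of $\otimes$.

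Your route is more self-contained. It needs only the definition of the integral on simple functions, scalar Tonelli, and the $L^1$-continuity of the Bochner integral. It also reuses the same approximating sequences that already gave measurability, so no vector-valued Fubini theorem or a.e.-defined inner integrals appear. The paper's route is shorter once the cited Bochner--Fubini theorem is available, but it depends on that result.

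One small correction to your closing remark: the pointwise bound $\|g_n\|\le 2\|g\|$ is not needed to keep $\|g_n\|_{L^1}$ bounded. Once $\|g_n-g\|_{L^1}\to 0$, the inequality $\|g_n\|_{L^1}\le\|g\|_{L^1}+\|g_n-g\|_{L^1}$ already does this. The domination is only a convenient way to get the $L^1$ convergence from a.e. convergence via dominated convergence.
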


\begin{proof}
    Since $f$ and $g$ are $\mu$-measurable and $\nu$-measurable, respectively, there exist sequences of simple functions $(f_n)$ and $(g_n)$ such that 
    \[
    f_n (s) \to f(s) \text{ for $\mu$-a.e.}, \text{ and } g_n(t) \to g(t) \text{ for $\nu$-a.e}.
    \]
    Note that $f_n\otimes g_n : S \times T\to X \pten Y$ is a simple function, and 
    \[
    (f_n \otimes g_n)(s,t)=f_n(s)\otimes g_n(t) \to f(s)\otimes g(t) \text{ for $\mu\times \nu$-a.e.}
    \]
    This proves that $F$ is $(\mu\times \nu)$-measurable. Moreover, by Tonelli's theorem, we have 
    \[
    \int_{S\times T} \|F(s,t)\|_\pi \, d(\mu\times\nu)(s,t) = \left(\int_S \|f(s)\|\,d\mu(s) \right) \left(\int_T \|g(t)\|\,d\nu(t) \right)<\infty.
    \]
    This shows that $F$ is Bochner integrable. 

    Next, by Bochner-Fubini's theorem (see, for instance, \cite[Proposition 1.2.7]{HvVW}), we have that 
    \begin{equation}\label{eq:Bochner-Fubini}
     \int_{S\times T} F(s,t) \, d(\mu\times \nu)(s,t) = \int_T \left( \int_S f(s)\otimes g(t)\,d\mu(s) \right) d\nu(t).
    \end{equation}
    For fixed $t \in T$, the map $R_{g(t)}: x \mapsto x \otimes g(t)$ is a bounded linear operator from $X$ into $X \pten Y$. Thus, 
    \begin{align*}
    \int_S f(s)\otimes g(t)\,d\mu(s) &= \int_S R_{g(t)} (f(s)) \, d\mu(s) \\
    &= R_{g(t)} \left( \int_{S} f(s) \, d\mu(s) \right) = \left( \int_S f(s) \,d\mu(s) \right) \otimes g(t).
    \end{align*}
    Thus, we have from \eqref{eq:Bochner-Fubini} that 
    \[
      \int_{S\times T} F(s,t) \, d(\mu\times \nu)(s,t) = \int_T \left[ \left( \int_S f(s) \,d\mu(s) \right) \otimes g(t) \right]\, d\nu(t).
    \]
    Arguing similarly with the operation $y \mapsto (\int_S f(s) \, d\mu(s)) \otimes y$, we conclude that 
    \[
     \int_{S\times T} F(s,t) \, d(\mu\times \nu)(s,t) = \left( \int_S f(s) \,d\mu(s) \right) \otimes \left( \int_T g(t) \,d\nu(t) \right). \qedhere
    \]
    \end{proof}

\begin{theorem}\label{thm:Theta-characterization}
Let $\Theta : \mathcal{P}([0,1]^2) \to X_{\mathsf D} \pten X_{\mathsf D}$ be defined as in Proposition \ref{prop:Theta-continuous-injection}.
Then, for every $\eta\in\mathcal P([0,1]^2)$,
$\Theta(\eta)\in
    \NA_\pi(X_{\mathsf D}\pten X_{\mathsf D})$ if and only if $\eta$ is a countable convex combination of product
probability measures. 
\end{theorem}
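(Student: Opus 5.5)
The ``if'' direction is direct. Suppose $\eta=\sum_n\lambda_n\,\alpha_n\times\beta_n$ with $\lambda_n\ge0$, $\sum_n\lambda_n=1$ and $\alpha_n,\beta_n\in\mathcal P([0,1])$. Since $\Theta$ is affine and norm continuous, and the series converges in total variation (hence weak-star), we get
\[
\Theta(\eta)=\sum_n\lambda_n\Theta(\alpha_n\times\beta_n)=\sum_n\lambda_n\,\xi(\alpha_n)\otimes\xi(\beta_n).
\]
The second equality is \eqref{eq:Theta-is-product-of-xi}, which follows from Lemma~\ref{lem:Fubini-Bochner} together with \eqref{eq:relation-integral-xi}. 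To justify the countable affinity, I would apply Bochner integration to the partial sums; the tail has norm at most $\sum_{n>N}\lambda_n$. Each $\xi(\alpha_n)$ lies in $F\subseteq B_{X_{\mathsf D}}$, and $\|\Theta(\eta)\|_\pi=1=\sum_n\lambda_n$ by Proposition~\ref{prop:Theta-continuous-injection}. Hence this is an optimal representation, and $\Theta(\eta)\in\NA_\pi$.

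For the ``only if'' direction, take an optimal representation $\Theta(\eta)=\sum_n\lambda_n x_n\otimes y_n$ with $x_n,y_n\in B_{X_{\mathsf D}}$, $\lambda_n>0$ and $\sum_n\lambda_n=1$. Apply the norm-one bilinear form $\mathcal A$ from the proof of Proposition~\ref{prop:Theta-continuous-injection}. This gives $1=\sum_n\lambda_n e_0^*(x_n)e_0^*(y_n)$, where each term satisfies $|e_0^*(x_n)e_0^*(y_n)|\le1$. Therefore $e_0^*(x_n)e_0^*(y_n)=1$ for every $n$, which forces $|e_0^*(x_n)|=|e_0^*(y_n)|=1$. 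Set $\omega_n:=e_0^*(x_n)\in\mathbb T$, so that $e_0^*(y_n)=\overline{\omega_n}$. By Lemma~\ref{lem:renorming}, $x_n\in\omega_nF$ and $y_n\in\overline{\omega_n}F$. Thus $x_n=\omega_n\xi(\alpha_n)$ and $y_n=\overline{\omega_n}\xi(\beta_n)$ for some $\alpha_n,\beta_n\in\mathcal P([0,1])$, and the phases cancel: $x_n\otimes y_n=\xi(\alpha_n)\otimes\xi(\beta_n)=\Theta(\alpha_n\times\beta_n)$.

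Next set $\eta':=\sum_n\lambda_n\,\alpha_n\times\beta_n\in\mathcal P([0,1]^2)$. By the countable affinity established above, $\Theta(\eta')=\sum_n\lambda_n x_n\otimes y_n=\Theta(\eta)$. Injectivity of $\Theta$ (Proposition~\ref{prop:Theta-continuous-injection}) then yields $\eta=\eta'$, which is the desired countable convex combination of product measures.

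The only step needing care is the exposed-face identification. Equality in $\sum_n\lambda_n\operatorname{Re}\big(e_0^*(x_n)e_0^*(y_n)\big)\le1$ must give $e_0^*(x_n)e_0^*(y_n)=1$ exactly, and in the complex case the phases must be tracked so that $\omega_n\overline{\omega_n}=1$. Once that is done, Lemma~\ref{lem:renorming} and the injectivity of $\Theta$ handle everything else.
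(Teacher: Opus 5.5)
Your proof is correct and follows the paper's argument essentially step for step: the norm-one bilinear form $\mathcal A(x,y)=e_0^*(x)e_0^*(y)$ forces $x_n\in\omega_nF$ and $y_n\in\overline{\omega_n}F$ via Lemma~\ref{lem:renorming}, the phases cancel, \eqref{eq:Theta-is-product-of-xi} gives $x_n\otimes y_n=\Theta(\alpha_n\times\beta_n)$, and injectivity of $\Theta$ identifies $\eta$ with $\sum_n\lambda_n\,\alpha_n\times\beta_n$. The only addition is your explicit tail estimate justifying $\Theta\bigl(\sum_n\lambda_n\,\alpha_n\times\beta_n\bigr)=\sum_n\lambda_n\Theta(\alpha_n\times\beta_n)$, which the paper uses without comment; note that $\Theta$ is weak-star-to-norm continuous only on probability measures, so it is this estimate (or normalizing the partial sums), not ``norm continuity'', that does the work.
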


\begin{proof}
Suppose first that
\[
    \eta = \sum_{n=1}^{\infty} \lambda_n(\alpha_n\times\beta_n),
\]
where $\alpha_n,\beta_n\in\mathcal P([0,1])$, $\lambda_n\geq0$, $\sum_{n=1}^{\infty}\lambda_n=1.$ By \eqref{eq:relation-integral-xi} and Lemma~\ref{lem:Fubini-Bochner}, we have
\begin{align*}
    \Theta(\alpha_n\times\beta_n)
    &=
    \int_{[0,1]^2}
       p(t)\otimes p(s)\,
       d(\alpha_n\times\beta_n)(t,s)\\
    &=
    \left(\int_{[0,1]}p(t)\,d\alpha_n(t)\right)
    \otimes
    \left(\int_{[0,1]}p(s)\,d\beta_n(s)\right)=
    \xi(\alpha_n)\otimes\xi(\beta_n).
\end{align*}
Consequently,
\[
    \Theta(\eta)
    =
    \sum_{n=1}^{\infty}
    \lambda_n
    \xi(\alpha_n)\otimes\xi(\beta_n).
\]
Since $\xi(\alpha_n)$, $\xi(\beta_n)\in F \subseteq S_{X_{\mathsf D}}$ and $\|\Theta(\eta)\|_\pi=1$, $\Theta(\eta)\in \NA_\pi(X_{\mathsf D}\pten X_{\mathsf D}).$

Conversely, suppose that
\[
    u:=\Theta(\eta)
       \in\NA_\pi(X_{\mathsf D}\pten X_{\mathsf D}).
\]
Then there is an optimal representation
\[
    u=\sum_{n=1}^{\infty}\lambda_nx_n\otimes y_n,
    \qquad
    x_n,y_n\in B_{X_{\mathsf D}},
    \quad
    \lambda_n\geq0,
    \quad
    \sum_{n=1}^{\infty}\lambda_n=1.
\]
Applying the bilinear form $\mathcal A(x,y):=e_0^*(x)e_0^*(y)$, we obtain
\[    
1 = \operatorname{Re}\mathcal A(u) =\sum_{n=1}^{\infty}\lambda_n \operatorname{Re} e_0^*(x_n)e_0^*(y_n) \leq  \sum_{n=1}^{\infty}\lambda_n =1.
\]
Therefore, $\operatorname{Re} e_0^*(x_n)e_0^*(y_n)=1$ for every $n$ with $\lambda_n>0$.
It follows that
\[
    e_0^*(x_n)=\omega_n, \qquad  e_0^*(y_n)=\overline{\omega_n}
\]
for some $\omega_n\in\mathbb T$.
By Lemma~\ref{lem:renorming}, there exist
$\alpha_n,\beta_n\in\mathcal P([0,1])$ such that
\[
    x_n=\omega_n\xi(\alpha_n),    \qquad    y_n=\overline{\omega_n}\xi(\beta_n).
\]
Thus, by Lemma \ref{lem:Fubini-Bochner} (so, by \eqref{eq:Theta-is-product-of-xi}), $x_n\otimes y_n = \xi(\alpha_n)\otimes\xi(\beta_n) =\Theta(\alpha_n\times\beta_n)$.

Define
\[
    \nu:= \sum_{n=1}^{\infty} \lambda_n(\alpha_n\times\beta_n) \in\mathcal P([0,1]^2).
\]
Then
\[
    \Theta(\eta)=
    u = \sum_{n=1}^{\infty} \lambda_n\Theta(\alpha_n\times\beta_n) = \Theta(\nu).
\]
Since $\Theta$ is injective (Proposition \ref{prop:Theta-continuous-injection}), we obtain $\eta=\nu$. Thus, $\eta$ is a countable convex combination of product probability measures.
\end{proof}

\subsection{Diagonal couplings and proof of Theorem \ref{thm:mainA}}

Let $\Delta : [0,1]\to[0,1]^2$ be the diagonal map, i.e., $\Delta(t) = (t,t)$ for every $t \in [0,1]$. Define $\Psi : \mathcal{P}([0,1]) \to X_{\mathsf D} \pten X_{\mathsf D}$ by 
\begin{equation}\label{def:psi(alpha)}
\Psi (\alpha) := (\Theta \circ \Delta_\sharp) (\alpha) = \int_{[0,1]} p(t)\otimes p(t)\,d\alpha(t) \quad (\alpha \in \mathcal{P}([0,1]) ).
\end{equation}
Note that $\Delta_\sharp$ is affine, weak-star continuous, and injective. Since $\mathcal P([0,1])$ is compact, $\Delta_\sharp$ is a homeomorphism onto its image. Consequently, $\Psi$ is also an affine homeomorphic embedding. 

The preceding theorem converts projective norm attainment on the image of $\Theta$ into a measure-theoretic condition. To apply this characterization to diagonal couplings, we next identify the product probability measures that can be supported on the diagonal. The following elementary lemma shows that the only such measures are products of Dirac masses.
\begin{lemma}\label{lem:diagonal-product}
Let $\alpha,\beta\in\mathcal{P}([0,1])$, and suppose that
\[
  (\alpha\times\beta)(\mathfrak{D})=1, \qquad  \mathfrak{D}:=\{(t,t):t\in [0,1]\}.
\]
Then there exists $t_0\in [0,1]$ such that $\alpha=\beta=\delta_{t_0}$.
\end{lemma}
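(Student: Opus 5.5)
The plan is to use Tonelli's theorem on the diagonal and get that both measures are Dirac masses at a common point. Since $\mathfrak D$ is closed, it is Borel, and
\[
  1=(\alpha\times\beta)(\mathfrak D)=\int_{[0,1]}\beta(\{t\})\,d\alpha(t),
\]
because the $t$-section of $\mathfrak D$ is $\{t\}$. Now $0\le\beta(\{t\})\le 1$ for all $t$ and $\alpha$ is a probability measure. Hence $\beta(\{t\})=1$ for $\alpha$-a.e.\ $t$.

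In particular, the set $A:=\{t:\beta(\{t\})=1\}$ has $\alpha(A)=1$, so it is nonempty. Pick $t_0\in A$. Then $\beta(\{t_0\})=1$, so $\beta=\delta_{t_0}$.

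It follows that $A=\{t_0\}$, since a probability measure can give full mass to at most one point. Thus $\alpha(\{t_0\})=1$, so $\alpha=\delta_{t_0}$ as well.

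The only point needing care is the justification of the Tonelli step, namely measurability of $t\mapsto\beta(\{t\})$. This follows from Tonelli applied to the indicator of the Borel set $\mathfrak D$ in $[0,1]^2$, whose Borel $\sigma$-algebra equals the product $\sigma$-algebra since $[0,1]$ is second countable. Alternatively, one can avoid Tonelli: partition $[0,1]$ into $2^m$ dyadic intervals $I_j$. Then $\mathfrak D\subseteq\bigcup_j I_j\times I_j$, so $\sum_j\alpha(I_j)\beta(I_j)=1$. This forces $\alpha(I_j)=\beta(I_j)=1$ for some $j$, and a nested intersection argument as $m\to\infty$ gives a common point $t_0$. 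I would present the Tonelli version as the main argument.
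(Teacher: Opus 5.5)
Your argument is correct, but it takes a different route from the paper.

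The paper never computes sections of $\mathfrak D$. It first uses that the coordinate maps $\pi_1,\pi_2$ agree $(\alpha\times\beta)$-a.e.\ to conclude that the marginals coincide, so $\alpha=\beta$. It then uses concentration on $\mathfrak D$ to get $\alpha(A)^2=(\alpha\times\beta)(A\times A)=(\alpha\times\beta)(\pi_1^{-1}(A))=\alpha(A)$ for every Borel $A$, so $\alpha$ is $\{0,1\}$-valued. The step from ``$\{0,1\}$-valued'' to ``$=\delta_{t_0}$'' is asserted without comment. It genuinely needs a localization argument that uses the structure of $[0,1]$, for instance exactly the nested dyadic argument you sketch as your fallback.

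Your Tonelli identity $\int_{[0,1]}\beta(\{t\})\,d\alpha(t)=1$ produces the atom directly and gives $\alpha=\beta$ as a by-product. It is therefore shorter and fully self-contained. It needs only that $\mathfrak D$ lies in the product $\sigma$-algebra, which both proofs use implicitly when they regard $\alpha\times\beta$ as a Borel measure on $[0,1]^2$. In exchange, the paper's version is phrased purely in terms of marginals and a $0$--$1$ law, and it avoids Fubini--Tonelli for sections.
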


\begin{proof}
On the probability space $([0,1]^2,\alpha\times\beta)$, let
$\pi_1 (t,s)=t$ and $\pi_2 (t,s)=s$ be the coordinate maps. By assumption,
$\pi_1 = \pi_2$ on $[0,1]^2$ $(\alpha\times\beta)$-a.e. Thus, 
\begin{align*}
    \alpha(A) &= (\alpha\times \beta)(A \times [0,1]) \\
    &=(\alpha\times \beta)(\pi_1^{-1} (A)) = (\alpha\times \beta)(\pi_2^{-1} (A)) = (\alpha\times\beta) ([0,1]\times A) = \beta(A)
\end{align*}
for every Borel $A\subseteq [0,1]$. Note that 
\[
\alpha(A)^2 = \alpha(A)\beta(A) = (\alpha\times \beta)(A\times A)=(\alpha\times \beta)(\{ (t,t):t \in A \})
\]
where the last equality holds as $\alpha \times \beta$ is concentrated on the diagonal. Since $ \pi_1^{-1}(A) \setminus \{ (t,t):t \in A \} \subseteq [0,1]^2 \setminus \mathfrak{D}$ and $(\alpha\times\beta)([0,1]^2 \setminus \mathfrak{D})=0$, we have that 
\[
(\alpha\times \beta)(\{ (t,t):t \in A \}) = (\alpha\times \beta)(\pi_1^{-1} (A)).
\]
Consequently, 
\[
\alpha(A)^2 = (\alpha\times\beta)(\pi_1^{-1}(A)) = \alpha(A).
\]
So, $\alpha(A)=1$ or $0$ for all Borel $A$. Similarly, $\beta(A)=0$ or $1$ for all Borel $A$. It follows that $\alpha=\beta=\delta_{t_0}$ for some $t_0 \in [0,1]$. 
\end{proof}

\begin{proof}[Proof of Theorem \ref{thm:mainA}]

We first establish the result for the space $X_{\mathsf D}$. Let $\Psi : \mathcal{P}([0,1]) \to S_{X_{\mathsf D}\pten X_{\mathsf D}}$ be the map defined in \eqref{def:psi(alpha)}. Fix $\alpha \in \mathcal{P}([0,1])$ and set 
\[
u = \Psi (\alpha) = \Theta (\Delta_\sharp \alpha ).
\]
By Proposition \ref{prop:integral-NA}, we have $u \in \INA_\pi (X_{\mathsf D} \pten X_{\mathsf D})$.

 Suppose that $\alpha \in \mathcal{P}([0,1])$ is countably supported. That is, 
\[
\alpha = \sum_{n=1}^\infty \lambda_n \delta_{t_n}, \quad t_n \in [0,1], \quad \lambda_n\geq0 \, \text{ with }\, \sum_{n=1}^\infty \lambda_n = 1.
\]
Then $\Delta_\sharp \alpha = \sum_{n=1}^\infty \lambda_n (\delta_{t_n}\times \delta_{t_n})$ and 
\[
\Psi (\alpha) = \Theta(\Delta_\sharp \alpha)=\sum_{n=1}^\infty \lambda_n p(t_n)\otimes p(t_n). 
\]
Hence Theorem \ref{thm:Theta-characterization} shows $\Psi(\alpha) \in \NA_\pi(X_{\mathsf D}\pten X_{\mathsf D})$.

Conversely, suppose that $u = \Psi (\alpha) \in\NA_\pi(X_{\mathsf D} \pten X_{\mathsf D})$. By Theorem~\ref{thm:Theta-characterization}, there exist
$\alpha_n,\beta_n\in\mathcal P([0,1])$ and $\lambda_n\geq0$ with $\sum_n\lambda_n=1$ such that
\[
    \Delta_\sharp\alpha = \sum_{n=1}^{\infty} \lambda_n(\alpha_n\times\beta_n).
\]
Let $\mathfrak{D}:=\{(t,t):t\in[0,1]\}.$ Since $(\Delta_\sharp\alpha)(\mathfrak{D})=1$, we have
\[
    1= \sum_{n=1}^{\infty} \lambda_n(\alpha_n\times\beta_n)(\mathfrak{D}).
\]
Thus, $(\alpha_n\times\beta_n)(\mathfrak{D})=1$ for every $n$ with $\lambda_n>0$. By Lemma~\ref{lem:diagonal-product}, there exists $t_n\in[0,1]$ such that $\alpha_n=\beta_n=\delta_{t_n}$. Consequently,
\[
    \Delta_\sharp\alpha
    =
    \sum_{n=1}^{\infty}
    \lambda_n\delta_{(t_n,t_n)}, 
\]
which implies that $\alpha = \sum_{n=1}^{\infty}\lambda_n\delta_{t_n}$. Hence $\alpha$ is countably supported.

Finally, let $E$ be an arbitrary infinite-dimensional Banach space. Choose $e^*\in E^*\setminus\{0\}$ and put $X:=\ker e^*$. Then $X_{\mathsf D}$ is linearly isomorphic to $\mathbb{K}\oplus X$; hence to $E$. This completes the proof.
\end{proof}

\begin{proof}[Proof of Corollary \ref{cor:mainA}]
Let $E$ be an arbitrary infinite-dimensional Banach space. Choose $e^*\in E^*\setminus\{0\}$ and put $X:=\ker e^*$. Then $X_{\mathsf D}$ is linearly isomorphic to $E$. Take $\alpha \in \mathcal{P}([0,1])$ to be the Lebesgue probability measure on $[0,1]$. By Proposition \ref{prop:integral-NA}, 
\[
\Psi(\alpha) = \Theta (\Delta_\sharp \alpha ) 
\]
belongs to $\INA_\pi (X_{\mathsf D} \pten X_{\mathsf D})$. Since $\alpha$ is nonatomic and $\Delta$ is injective, $\Delta_\sharp \alpha$ is also nonatomic. Thus, $\Psi(\alpha)$ is witnessed by a nonatomic Radon probability measure. On the other hand, Theorem \ref{thm:mainA} shows that $\Psi (\alpha) \not\in \NA_\pi (X_{\mathsf D} \pten X_{\mathsf D})$.
\end{proof}

\section{Topological variants of integral projective norm attainment}\label{sec:topological-variants}

Integral representations may be formulated using the norm, weak, or weak-star Borel structure on the product of the unit balls. Since these Borel structures need not coincide globally, the corresponding classes could a priori differ. The key observation is that Bochner integrability itself forces the tensor-valued map to be essentially separably valued.

\subsection{Borel localization on separable supports}
For a completely regular Hausdorff space $(S, \mathcal{T})$, we write $\borel (S,\mathcal{T})$ for the $\sigma$-algebra generated by $\mathcal{T}$-open sets, and $\baire (S, \mathcal{T})$ for the $\sigma$-algebra generated by the continuous real-valued functions on $S$. That is, 
\[
\baire (S,\mathcal{T}) = \sigma \{ f^{-1} (U): f \in C(S;\mathbb{R}), \, U \subseteq \mathbb{R} \text{ open}\}.
\]

Following \cite[Section~3]{ADGJR}, let $\tau$ be a Hausdorff topology on $B_X\times B_Y$. An element $u \in X \pten Y$ is said to be \emph{$\tau$-integral projective norm-attaining} if there exists a finite positive measure $\mu$ on $\borel(B_X\times B_Y,\tau)$ such that the mapping $\varphi : (B_X\times B_Y,\tau)
 \to X \pten Y$, defined by $\varphi(x,y)=x\otimes y$, is $\mu$-Bochner integrable and satisfies 
\[
u= \int_{B_X \times B_Y} \varphi (x,y) \, d\mu(x,y) \quad \text{and} \quad \|\mu\| = \|u\|_\pi.
\]
The collection of all such tensors is denoted by $\INA_\tau (X \pten Y)$.

When $\tau$ is the product of the relative weak topologies on $B_X$ and $B_Y$, we write $\INA_w(X\widehat{\otimes}_{\pi}Y)$. Similarly, $\INA_{w^*}(X^*\widehat{\otimes}_{\pi}Y^*)$ denotes the class corresponding to the product of the relative weak-star topologies on $B_{X^*}$ and $B_{Y^*}$.

As recorded in \cite[Section~3]{ADGJR}, we always have 
\[
\INA_\pi (X \pten Y) \subseteq \INA_w (X \pten Y)
\]
and in the dual setting, 
\[
\INA_\pi (X^* \pten Y^*) \subseteq \INA_w (X^* \pten Y^*) \subseteq  \INA_{w^*} (X^*\pten Y^*).
\]
Consequently, to prove Theorem~\ref{thm:mainB}, it is enough to establish the reverse inclusions. The idea is to localize the representing measure, modulo a null set, to a product of separable subspaces. On this separable support, the relevant Borel structures coincide, thanks to the following fact.

\begin{lemma}\label{lem:Borel-Baire}
    Let $X$ be a Banach space. 
\begin{enumerate}[leftmargin=2em, label=(\roman*), ref=(\roman*)]
\itemsep0.25em
        \item If $X$ admits a Kadec norm, then $\borel (X, w)=\borel (X,\|\cdot\|)$. \label{item:Kadec} \item If $Z$ is a norm-separable subspace of $X^*$, then $\baire (Z, w^*)=\borel (Z,\|\cdot\|) .$ \label{item:Talagrand}
    \end{enumerate}
\end{lemma}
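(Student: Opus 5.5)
This lemma collects two known results, and I would prove it by citing the literature, adding only a short sketch of how each statement follows.

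For \ref{item:Kadec}, the plan follows Edgar \cite{Edgar}. Since every weakly open set is norm open, $\borel(X,w)\subseteq\borel(X,\|\cdot\|)$. For the reverse inclusion, it suffices to show that every norm-open set lies in $\borel(X,w)$.

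\begin{enumerate}[leftmargin=2em, label=(\arabic*)]
\item Closed balls $B(x,r)$ are weakly closed, because norms are weakly lower semicontinuous. Hence spheres $S(x,r)=B(x,r)\setminus\bigcup_n B(x,r-1/n)$ and open balls are weak Borel sets. This settles the separable case at once, since norm-open sets are then countable unions of balls.
\item In the nonseparable case, use the Kadec property: the weak and norm topologies agree on the unit sphere.
\item Combine this with a $\sigma$-discrete network argument to write an arbitrary norm-open $U$ as a countable union of sets of the form $S(0,r)\cap W$, with $W$ weakly open. Equivalently, show that on each sphere the traces of norm-open sets are relatively weakly open. Then slice $U$ by the countably many rational radii, handling the remaining radii by a limiting argument.
\end{enumerate}

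The main obstacle is step (3), since the spheres $S(0,r)$ are uncountably many. Rather than reproduce this argument, I would cite Edgar's theorem that a Kadec norm makes the two Borel $\sigma$-algebras coincide.

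For \ref{item:Talagrand}, I would invoke Talagrand \cite{Talagrand}. The inclusion $\baire(Z,w^*)\subseteq\borel(Z,\|\cdot\|)$ is easy: $w^*$-continuous functions are norm continuous.

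For the converse:

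\begin{enumerate}[leftmargin=2em, label=(\arabic*)]
\item Since $Z$ is norm separable, every norm-open set is a countable union of closed norm balls $B(z,r)\cap Z$ with $z\in Z$ and rational $r$. It therefore suffices that each such ball lies in $\baire(Z,w^*)$.
\item Dual balls are $w^*$-closed, so each ball is a relatively $w^*$-closed subset of $Z$. Hence it lies in $\borel(Z,w^*)$.
\item Upgrading to Baire sets requires writing the ball as a zero set, or as a countable intersection of cozero sets, of $w^*$-continuous functions on $Z$.
\item The natural try is $z^*\mapsto\|z^*-z\|=\sup_{x\in B_X}|\langle z^*-z,x\rangle|$. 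This is only $w^*$-lower semicontinuous, not continuous, so it does not give step (3) directly.
\end{enumerate}

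This gap between Borel and Baire in step (3) is the main obstacle. It is exactly what Talagrand's theorem resolves: for norm-separable $Z$, the norm is measurable with respect to the $w^*$-Baire $\sigma$-algebra. I would cite that result, after which the countable-union argument in step (1) finishes the proof.
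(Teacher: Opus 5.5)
Your proposal is essentially the paper's proof. The paper gives no argument of its own for this lemma: it cites Edgar (Corollary~2.4) for (i) and Talagrand (Proposition~2-2-5) for (ii), which is exactly where you end up. Your heuristic sketches are not needed, but two points in them need correcting. In (i)(3), a countable union of sets $S(0,r)\cap W$ meets only countably many spheres, so it cannot exhaust a norm-open set. The workable version uses thin shells $W\cap\{y:\lvert\|y\|-r\rvert<\delta\}$, with $W$ weakly open and $r,\delta$ rational, since the Kadec property extends from each sphere to such shells. In (ii), the ``obstacle'' disappears once you choose a countable set $D\subseteq B_X$ norming the separable space $Z$: then $\|z-z_0\|=\sup_{x\in D}\lvert (z-z_0)(x)\rvert$ is a countable supremum of $w^*$-continuous functions, so closed balls are $w^*$-Baire.
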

Recall that a norm on a Banach space $X$ is called a \emph{Kadec norm} if the weak and norm topologies of $X$ coincide on the unit sphere $S_X$, and recall that every separable Banach space admits an equivalent Kadec norm.

Part~\ref{item:Kadec} is due to Edgar (\cite[Corollary 2.4]{Edgar}), whereas part~\ref{item:Talagrand} is Talagrand's result (\cite[Proposition 2-2-5]{Talagrand}). Since 
\[
\baire (Z, w^*) \subseteq \borel (Z, w^*) \subseteq \borel (Z, \|\cdot\|) 
\]
in general, Lemma \ref{lem:Borel-Baire}\ref{item:Talagrand} shows that $\borel (Z, w^*) = \borel (Z,\|\cdot \|)$.

\subsection{Proof of Theorem \ref{thm:mainB}}

\begin{proof}[\text{Proof of Theorem \ref{thm:mainB}}]
Throughout the proof, every subset of a topological space is endowed with the relative topology. For $C\subseteq B_X\times B_Y$, the notation $(C,w)$ refers to the topology induced by
\[
\sigma(X,X^*)\times\sigma(Y,Y^*),
\]
whereas $(C,\|\cdot\|)$ refers to the topology induced by the product of the norm topologies. For subsets of $B_{X^*}\times B_{Y^*}$, the notation $w^*$ is understood analogously, using
\[
\sigma(X^*,X)\times\sigma(Y^*,Y).
\]

(a) We show that $\INA_w(X\pten Y)=\INA_\pi(X\pten Y).$

\noindent Let $u\in\INA_w(X\widehat{\otimes}_\pi Y) \setminus \{0\}$ and $\mu$ be a finite positive Borel measure on $(B_X \times B_Y, w)$ such that the mapping
\[
\varphi:
(B_X \times B_Y, w) \to X\widehat{\otimes}_\pi Y,
\qquad
\varphi(x,y)=x\otimes y,
\]
is $\mu$-Bochner integrable and
\[
u=\int_{B_X\times B_Y}\varphi(x,y)\,d\mu(x,y),
\qquad
\|u\|_\pi=\|\mu\|.
\]

Note that $\mu$ is concentrated on $S_X\times S_Y$. That is, 
\[
\|x\|=\|y\|=1 \quad \text{for $\mu$-almost every }(x,y)\in B_X\times B_Y.
\]
By the $\mu$-Bochner integrability of $\varphi$, there exist a
$\mu$-null Borel set
\[
N\in
\operatorname{Borel}(B_X \times B_Y, w)
\]
and a separable subspace $E$ of $X\widehat{\otimes}_\pi Y$ such that $\varphi((B_X\times B_Y)\setminus N)\subseteq E.$
Enlarging $N$ by a $\mu$-null Borel set if necessary, we may also
assume that
\[
(B_X\times B_Y)\setminus N\subseteq S_X\times S_Y.
\]

Set $A:=(B_X\times B_Y)\setminus N$. Choose a sequence $(x_n,y_n)\subseteq A$ such that $\{x_n\otimes y_n:n\in\mathbb N\}$ is norm dense in $\varphi(A)$, and put
\[
X_0:=\overline{\operatorname{span}}\{x_n:n\in\mathbb N\},
\qquad
Y_0:=\overline{\operatorname{span}}\{y_n:n\in\mathbb N\}.
\]

We claim that $A\subseteq B_{X_0}\times B_{Y_0}$. Indeed, let $(x,y)\in A$. There exists a sequence $(n_k)$ such that
\[
x_{n_k}\otimes y_{n_k}\to x\otimes y \quad \text{ in }X\widehat{\otimes}_\pi Y.
\]
If $x\notin X_0$, then by the Hahn--Banach theorem there exists $x^*\in X^*$ such that
\[
x^*\restricted_{X_0}=0 \quad\text{ and } \quad x^*(x)\neq0.
\]
Applying the bounded operator $x^*\otimes I_Y: X\widehat{\otimes}_\pi Y\to Y$ gives
\[
0= (x^*\otimes I_Y)(x_{n_k}\otimes y_{n_k}) \to (x^*\otimes I_Y)(x\otimes y) = x^*(x)y.
\]
This is impossible, since $x^*(x)\neq0$ and $\|y\|=1$. Hence
$x\in X_0$. Similarly, $y\in Y_0$, proving the claim.

By the Hahn--Banach theorem, the relative weak topologies inherited from $X$ and $Y$ coincide with $\sigma(X_0,X_0^*)$ and $\sigma(Y_0,Y_0^*)$, respectively. That is,
\[
\sigma(X, X^*) \restricted_{X_0} = \sigma(X_0, X_0^*).
\] 
Consider the separable Banach space
\[
H:=X_0\oplus_\infty Y_0.
\]
Then $B_H=B_{X_0}\times B_{Y_0}$, and the relative weak topology on $B_H$ is precisely the weak-product topology on $B_{X_0}\times B_{Y_0}$. Since $H$ is separable, it admits an equivalent Kadec norm. Hence Lemma~\ref{lem:Borel-Baire}\ref{item:Kadec} yields
\begin{equation}\label{eq:X0Y0}
\borel(B_{X_0}\times B_{Y_0},w)
=
\borel(B_{X_0}\times B_{Y_0},\|\cdot\|).
\end{equation}

Since $A \in \borel (B_X \times B_Y, w)$ and $A \subseteq B_{X_0}\times B_{Y_0}$, restriction gives 
\[
A \in \borel (B_{X_0}\times B_{Y_0}, w).
\]
Thus, \eqref{eq:X0Y0} gives that $A \in \borel (B_{X_0}\times B_{Y_0} , \|\cdot \|)$. Since $B_{X_0} \times B_{Y_0}$ is norm closed in $B_X \times B_Y$, we conclude that $A \in \borel (B_X \times B_Y, \|\cdot\|)$.

Let $\mu_A$ denote the restriction of $\mu$ to $A$. By \eqref{eq:X0Y0}, the weak-product and norm-product Borel structures on $A$ coincide, so $\mu_A$ may be regarded as a finite positive norm-Borel measure on $A$. 

Let $\iota: (A, \|\cdot\| ) \to (B_X \times B_Y, \| \cdot \|)$ be the inclusion mapping, and define
\[
\nu:=\iota_\sharp\mu_A.
\]
Then $\nu$ is a finite positive Borel measure on $(B_X \times B_Y, \| \cdot \|)$. Furthermore, $\nu$ is concentrated on $A$ and $\varphi(A)\subseteq E$. Thus $\varphi$ is $\nu$-essentially separably valued. Since $\|\varphi(x,y)\|_\pi \leq 1$ for every $(x,y)\in B_X\times B_Y$, $\nu$ is a finite measure, and $\varphi$ is norm continuous, $\varphi$ is $\nu$-Bochner integrable.

By the change-of-variables formula for pushforward measures,
\begin{align*}
\int_{B_X\times B_Y}\varphi(x,y)\,d\nu(x,y)
&=
\int_A\varphi(x,y)\,d\mu_A(x,y)\\
&=
\int_{B_X\times B_Y}\varphi(x,y)\,d\mu(x,y)=u,
\end{align*}
where the second equality follows from $\mu(N)=0$. Moreover,
\begin{align*}
\|\nu\| = \nu(B_X\times B_Y)  =
\mu_A(A) =
\mu(B_X\times B_Y) =
\|\mu\| =
\|u\|_\pi.
\end{align*}
It follows that $u\in\INA_\pi(X\widehat{\otimes}_\pi Y).$

(b) Next, we show that $\INA_{w^*} (X^*\pten Y^*)=\INA_\pi(X^* \pten Y^*).$ 

\noindent  Let $u \in \INA_{w^*} (X^* \pten Y^*) \setminus\{0\}$ and $\mu$ be a witnessing finite positive Borel measure on $(B_{X^*} \times B_{Y^*}, w^*)$ such that the mapping
\[
\varphi: (B_{X^*} \times B_{Y^*}, w^*)  \to X^* \widehat{\otimes}_\pi Y^* , \qquad \varphi(x^*,y^*)=x^*\otimes y^*,
\]
is $\mu$-Bochner integrable and
\[
u=\int_{B_{X^*}\times B_{Y^*}}\varphi(x^*,y^*)\,d\mu(x^*,y^*), \qquad \|u\|_\pi=\|\mu\|.
\]
 As above, after removing a $\mu$-null weak-star Borel set, we obtain a set $A  \subseteq S_{X^*} \times S_{Y^*}$ and a separable subspace $E$ of $X^* \pten Y^*$  such that $\varphi (  A) \subseteq E$.

Choose a sequence $(x_n^*,y_n^*) \subseteq A$ such that $\{x_n^* \otimes y_n^* : n \in \mathbb{N}\}$ is norm dense in $\varphi (A)$.  Let $W := \overline{\text{span}} \{x_n^* : n\in\mathbb{N}\}$ and $Z := \overline{\text{span}} \{y_n^* : n\in\mathbb{N}\}$. Then the same Hahn-Banach argument as above, now using elements of $X^{**}$ and $Y^{**}$, shows that 
 \[
 A \subseteq B_W \times B_Z.
 \]

Let $G:= X \oplus_1 Y$ and $H:= W\oplus_\infty Z$. Then $H$ is a norm-separable subspace of $G^*$. Moreover, $B_H = B_W \times B_Z$, and the relative weak-star topology inherited from $\sigma (G^*,G)$ is precisely the product of the relative weak-star topologies on $W$ and $Z$. Thus, by Lemma \ref{lem:Borel-Baire}\ref{item:Talagrand}, we obtain 
$$
\borel (H,w^*)=\borel (H,\|\cdot\|).
$$ 
Restricting this equality to $B_H=B_W\times B_Z$ and then to $A$, we have 
\[
\borel (A, w^*)=\borel (A,\|\cdot \|).
\]

As above, the restriction $\mu_A$ of $\mu$ to $\borel (A, w^*)$ can be regarded as a finite positive Borel measure on $(A,\|\cdot\|)$. Let $\iota : (A,\|\cdot\|) \to (B_{X^*}\times B_{Y^*}, \|\cdot \|)$ be the inclusion map and let $\nu := \iota_\sharp \mu_A$. Then $\varphi$ is $\nu$-Bochner integrable, and 
\[
u = \int_{B_{X^*}\times B_{Y^*}} \varphi (x^*,y^*) \,d\nu(x^*,y^*) \quad \text{with} \quad \|u\|_\pi = \|\nu\|.
\]
It follows that $u \in \INA_\pi (X^* \pten Y^*).$
\end{proof}

A related global separability argument appears in the recent preprint \cite{HKMRZ26}. Under the assumption that $X^*$ and $Y^*$ are separable, \cite[Lemma~4.6]{HKMRZ26} uses the coincidence of the weak-star and norm Borel structures on the whole product $B_{X^*}\times B_{Y^*}$ to obtain Bochner integrability of the canonical tensor map. Combined with an approximation-property assumption, this yields
\[
    X^*\widehat{\otimes}_\pi Y^* = \INA_\pi(X^*\widehat{\otimes}_\pi Y^*)
\]
in \cite[Corollary~4.7]{HKMRZ26}.

Theorem \ref{thm:mainB} has a different scope and conclusion. It assumes neither
separability nor the approximation property. Rather, it shows that every weak or weak-star integral representation may be replaced by a norm-Borel one. The essential separability forced by Bochner integrability allows the argument to be localized even when the corresponding Borel structures do not coincide on the ambient unit balls.

\subsection{Consequences}

Theorem \ref{thm:mainB} shows that the weak and weak-star representations introduced in~\cite{ADGJR} do not enlarge the class of integral projective norm-attaining tensors once Bochner integrability is imposed. This is a local phenomenon: the corresponding Borel structures may still differ on the ambient unit balls, as the next remark emphasizes.

\begin{remark}
    The equalities in Theorem \ref{thm:mainB} do not follow from a global coincidence of the corresponding Borel structures; in fact, these structures may differ already on a single unit ball. Indeed, Talagrand showed in \cite{Talagrand_Borel} that 
    \[
    \borel (\ell_\infty, w) \neq \borel (\ell_\infty, \|\cdot \|).
    \]
    This implies that $\borel (B_{\ell_\infty}, w) \neq \borel (B_{\ell_\infty}, \|\cdot\|)$. Moreover, since 
    \[
    \borel (B_{\ell_\infty}, w^*) \subseteq \borel (B_{\ell_\infty}, w), 
    \]
    we also have that $\borel (B_{\ell_\infty}, w^*) \neq \borel (B_{\ell_\infty}, \|\cdot\|)$.
\end{remark}

We now combine topology-independence with the existence theorem for weak-star integral representations from~\cite{ADGJR}. This yields spaces for which every tensor is integral projective norm-attaining, while Theorem~\ref{thm:mainA} still provides a tensor with no countable optimal representation.

\begin{proof}[Proof of Corollary \ref{cor:mainB}] Let $X$ be an infinite-dimensional separable reflexive Banach space with the approximation property. 
    By Corollary \ref{cor:mainA}, there exists an equivalent norm $\vertiii{ \cdot }$ on $X$ such that $Z= (X, \vertiii{\cdot })$ satisfies that $\NA_\pi (Z \pten Z) \neq  \INA_\pi (Z\pten Z)$. Equivalent renormings preserve separability, reflexivity, and the approximation property. 
    
    By \cite[Proposition 3.2]{ADGJR}, $\INA_{w^*} (Z\pten Z)=Z\pten Z$. Now, Theorem \ref{thm:mainB} yields that $\INA_{\pi} (Z\pten Z)=Z\pten Z.$
\end{proof}

\noindent \textbf{Acknowledgements}: The author was supported by the research fund of Hanyang University (HY-202500000003346).

\end{document}